\definecolor{wineRed}{rgb}{0.7,0,0.3}
\definecolor{grandBleu}{rgb}{0,0,0.8}
\definecolor{darkGreen}{rgb}{0,0.4,0}
\definecolor{blueViolet}{rgb}{0.4,0,1.0}
\definecolor{bloodOrange}{rgb}{0.85,0.05,0}
\definecolor{mycolor}{rgb}{0.8,0,0.2}
\DeclareMathAlphabet{\mathpzc}{OT1}{pzc}{m}{it}
\numberwithin{equation}{section}
\theoremstyle{plain}
\newtheorem{mTh}{Main Theorem} 
\newtheorem{lem}{Lemma}[section]
\theoremstyle{definition}
\newtheorem{defn}{Definition}
\newtheorem{thm}{Theorem}
\newtheorem{rem}{Remark}
\newtheorem{ex}{Example}
\def\B{\mathbb{B}}
\def\N{\mathbb{N}}
\def\R{\mathbb{R}}
\def\D{\mathscr{D}}
\def\L{\mathcal{L}}
\def\H{\mathcal{H}}
\def\sH{{L^2(0,T;H)}}
\def\ds{\displaystyle}
\def\Lap{\mathit{\Delta}}
\def\Sgn{\mathop{\mathrm{Sgn}}\nolimits}
\def\supp{\mathop{\mathrm{supp}}}
\DeclareMathOperator{\essinf}{ess\inf}
\DeclareMathOperator{\diver}{div}
\newcommand{\trace}[1]{\left.{#1}\right|_\Gamma}
\begin{document}
\vspace*{0.5cm}
\begin{center}
    \textbf{\large A CLASS OF INITIAL-BOUNDARY VALUE PROBLEMS 
    \\[0.5ex]
    GOVERNED BY  PSEUDO-PARABOLIC 
    \\[1ex]
    WEIGHTED TOTAL VARIATION FLOWS}\footnotemark[1]
\end{center}
\vspace{1ex}
\begin{center}
    \textit{Dedicated to Professor Nobuyuki Kenmochi on the occasion of his 77th birthday}
\end{center}
\vspace{3ex}
\begin{center}
    \textsc{Toyohiko Aiki}
    \\
Department of Mathematics, Faculty of Sciences, \\
Japan Women's University, 
\\
    2--8--1, Mejirodai,Bunkyo-ku, Tokyo 112--8681, Japan
    \\
{\ttfamily aikit@fc.jwu.ac.jp}
\vspace{1ex}
\end{center}
\begin{center}
    \textsc{Daiki Mizuno}
\\
    Division of Mathematics and Informatics, 
    \\
    Department of Mathematics and Informatics, \\ Graduate School of Science and Engineering, Chiba Univercity, \\ 1--33, Yayoi-cho, Inage-ku, 263--8522, Chiba, Japan
\\
    {\ttfamily d-mizuno@chiba-u.jp}
\vspace{1ex}
    \end{center}
    \begin{center}
        \textsc{Ken Shirakawa}
        \\
Department of Mathematics, Faculty of Education, Chiba University \\ 1--33 Yayoi-cho, Inage-ku, 263--8522, Chiba, Japan
\\
{\ttfamily sirakawa@faculty.chiba-u.jp}
    \end{center}
\footnotetext[1]{
This work is supported by  Grant-in-Aid for Scientific Research (C) No. 20K03672, JSPS.\\
AMS Subject Classification: 
35K70, 
35K59, 
35K61, 
35J62.
\\
Keywords: pseudo-parabolic total variation flow, well-posedness of initial-boundary value problem, regularity of solution
}
\vspace{5ex}

\noindent
{\bf Abstract.}
In this paper, we consider a class of initial-boundary value problems governed by pseudo-parabolic total variation flows. The principal characteristic of our problem lies in the velocity term of the diffusion flux, a feature that can bring about stronger regularity than what is found in standard parabolic PDEs. Meanwhile, our total variation flow contains singular diffusion, and this singularity may lead to a degeneration of the regularity of solution. The objective of this paper is to clarify the power balance between these conflicting effects. Consequently, we will present mathematical results concerning the well-posedness and regularity of the solution in the Main Theorems of this paper.
\newpage

\section*{Introduction}
This paper is devoted to the study of a class of initial-boundary value problems of pseudo-parabolic PDEs. Each initial-boundary value problem is denoted by $ \mathrm{(P)}_\varepsilon $, with a constant $ \varepsilon \geq 0 $, and formulated as:
\begin{align*}
    \mathrm{(P)}_\varepsilon \hspace{4ex} 
    & \begin{cases}
        \ds \partial_t u -\mathrm{div} \bigl( \alpha(x) \partial \gamma_\varepsilon(\nabla u) +\beta(x) \nabla \partial_t u \bigr) \ni 0 ~ \mbox{ in $ Q $,}
        \\[1ex]
        \bigl( \alpha(x) \partial \gamma_\varepsilon(\nabla u) +\beta(x) \nabla \partial_t u \bigr) \cdot n_\Gamma \ni 0 ~ \mbox{ on $ \Sigma $,}
        \\[1ex]
        u(0, x) = u_0(x), ~~ x \in \Omega.
    \end{cases}
\end{align*}
The class of initial-boundary value problems $\bigl\{\mbox{(P)$_\varepsilon$}\bigr\}_{\varepsilon \geq 0}$ is considered under the following notations and assumptions: 
\begin{description}
    \item[(A0)]$0 < T < \infty$ and $N \in \N$ are fixed constants of time and spatial dimension, respectively. 
    \item[(A1)]$ \Omega \subset \R^N $ is a bounded domain such that the boundary $ \Gamma := \partial \Omega $ is smooth ($ C^\infty $-class) when $ N > 1 $. Also, $ n_\Gamma $ is the unit outer normal on $ \Gamma $. Additionally, we let:
        \begin{align*}
            & Q := (0, T) \times \Omega, ~~ \Sigma := (0, T) \times \Gamma,
            \\
            & ~~ H :=  L^2(\Omega), ~~ \mbox{and} ~~
            V := H^1(\Omega).
        \end{align*}
    \item[(A2)] $ \alpha \in H^1(\Omega) \cap L^\infty(\Omega) $, $ \beta \in W^{1, \infty}(\Omega) $ and $f \in \sH$ are fixed functions, such that:
        \begin{center}
            $ \essinf \alpha(\Omega) \geq 0 $, and $ \delta_* := \inf \beta(\Omega) > 0 $. 
        \end{center}
    \item[(A3)]$ \{ \gamma_\varepsilon \}_{\varepsilon \geq 0} $ is a class of convex function on $ \R^N $, defined as:
        \begin{align*}
            & \gamma_\varepsilon: \R^d \ni y \mapsto \gamma_\varepsilon(y) := \sqrt{\varepsilon^2 + |y|^2} \in [0,\infty), ~\mbox{for $ \varepsilon \geq 0 $.} 
        \end{align*}
        Also, for any $ \varepsilon \geq 0 $, $ \partial \gamma_\varepsilon \subset \R \times \R $ denotes the subdifferential of $ \gamma_\varepsilon $.
    \item[(A4)]$ u_0 \in H $ is a fixed function such that:
        \begin{align*}
            & u_0 \in W_0 := \left\{ \begin{array}{l|l}
                w \in H^2(\Omega) & \nabla w \cdot n_\Gamma = 0 ~\mbox{in $ H^{\frac{1}{2}}(\Gamma) $}
            \end{array} \right\}.
        \end{align*}
\end{description}
Besides, for each $ \varepsilon \geq 0 $, the solution to the problem (P)$_\varepsilon$ is defined in the following weak (variational) sense.
\begin{defn}\label{Def.sol}
    For any $ \varepsilon  \in [0, 1] $, a function $ u : [0, T] \longrightarrow H $ is called a solution to (P)$_\varepsilon$, iff.
    \begin{align}\label{defOfSol00}
        & u \in W^{1, 2}(0, T; V), \mbox{ with $ u(0) = u_0 $ in $ H $,}
    \end{align}
    and
    \begin{align}\label{defOfSol01}
        \int_\Omega \partial_t u(t) & (u(t) -\varphi) \, dx + \int_\Omega \beta(x) \nabla \partial_t u(t) \cdot \nabla \bigl( u(t) -\varphi \bigr) \, dx +\int_\Omega \alpha(x) \gamma_\varepsilon(\nabla u(t)) \, dx 
        \\[1ex]
        & \leq \int_\Omega \alpha(x) \gamma_\varepsilon(\nabla \varphi) \, dx, ~ \mbox{ for any $ \varphi \in V $ and for a.e. $t \in (0,T)$}.
    \end{align}
\end{defn}

The class of problems $ \{ \mathrm{(P)}_\varepsilon \}_{\varepsilon \geq 0} $ is motivated to establish a theory of pseudo-parabolic version for the weighted total variation flow:
\begin{align}\label{TVF}
    & \begin{cases}
        \ds \partial_t u -\mathrm{div} \left( \alpha (x) \frac{Du}{|Du|} \right) = 0 ~ \mbox{ in $ Q $,}
        \\[1ex]
        \bigl( \alpha(x) \frac{Du}{|Du|} \bigr) \cdot n_\Gamma = 0 ~ \mbox{ in $ \Sigma $,}
        \\[1ex]
        u(0, x) = u_0(x), ~ \mbox{$ x \in \Omega $,}
    \end{cases}
\end{align}
which has been studied by a lot of mathematicians, as one of key-problems of mathematical models of image denoising processes \cite{MR2028858,MR2170510,MR3395125},  grain boundary motions \cite{MR2096945,MR2746654,MR1865089,MR1712447,giga2023fractional}, phase-transitions \cite{MR1847840,MR1851862}, and so on. According to the previous works, it is known that the weak solution to \eqref{TVF} admits the following regularity:
\begin{align}\label{regTVF}
    u & \in W^{1, 2}(0, T; H) ~\mbox{ and }~ |u(\cdot)|_{BV(\Omega)} \in L^\infty(0, T), 
    \\
    & \mbox{whenever $ \essinf \alpha(\Omega) > 0 $ and $ u_0 \in BV(\Omega) \cap H $. }
    \nonumber
\end{align}
Comparing \eqref{defOfSol00} with \eqref{regTVF}, we can observe a certain regularization effect of the velocity of flux $ \beta(x) \nabla \partial_t u $ as in (P)$_\varepsilon$.
However, in this paper, our interest will be in more fine regularity than \eqref{defOfSol00}. 

Actually, nowadays, we can find a number of mathematical researches \cite{MR0437936,MR0330774,MR0466818,MR2263926}, which deal with pseudo-parabolic problems based on linear/nonlinear PDEs, and from some of these, we can also see the regularity property comparable with:
\begin{align}\label{reg_pLap}
    & u \in W^{1, 2}(0, T; W^{2, p}(\Omega)) ~\mbox{ for some $ 1 < p < \infty $.}
\end{align}

In our problem (P)$_\varepsilon$, if $ \varepsilon > 0 $, then the diffusion flux $ \partial \gamma_\varepsilon(\nabla u) $ as in (P)$_\varepsilon$ is described in a smooth quasilinear form  $  \alpha \frac{\nabla u}{\sqrt{\varepsilon^2 +|\nabla u|^2}} $, and the smoothness of flux would lead to some strong regularity similar to \eqref{reg_pLap}. 
But while, if $ \varepsilon = 0 $, then the corresponding diffusion flux $ \alpha \frac{Du}{|Du|}  $ contains a singularity, and this singularity would bring down some degeneration for the regularization effect of pseudo-parabolicity. 

The objective of this paper is to clarify the power balance between these conflicting effects. In view of this, we have set the goal to prove the following two Main Theorems. 
\begin{description}
    \item[Main Theorem 1:]the result for the problem (P)$_\varepsilon$ when $ \varepsilon > 0 $, i.e. the smooth case, which is based on the regularity $ u \in W^{1, 2}(0, T; W_0) $ of the solution $u$. 
    \item[Main Theorem 2:]the result for the problem (P)$_0$, i.e. the singular case when $ \varepsilon = 0 $, which is to verify the regularity $ u \in L^\infty(0, T; W_0) $ of the solution $ u $, and to clarify the rigorous mathematical meaning of the set-valued component $ \alpha(x) \partial \gamma_0(\nabla u) $.
\end{description}

The content of this paper is as follows. Preliminaries are given in Section 1, and on this basis, the Main Theorems are stated in Section 2. For the proofs of Main Theorems, we prepare Section 3 to setting up of the regularity theory for an auxiliary elliptic boundary value problem. The auxiliary problem is associated with the time-discretization scheme of our problem (P)$_\varepsilon$. Hence, essentially, Section 3 will have a key role to underpin the theoretical part of this work. Based on these, the Main Theorems are proved in Section 4, by means of the auxiliary results obtained in Section 3, and appendix in Section 5.

\section{Preliminaries}
We begin by prescribing the notations used throughout this paper. 
\bigskip

\noindent
\underline{\textbf{\textit{Notations in real analysis.}}}
We define:

\begin{align*}
    & r \vee s := \max \{ r, s \} ~ \mbox{ and } ~ r \wedge s := \min \{r, s\}, \mbox{ for all $ r, s \in [-\infty, \infty] $,}
\end{align*}
and especially, we write:
\begin{align*}
    & [r]^+ := r \vee 0 ~ \mbox{ and } ~ [r]^- := -(r \wedge 0), \mbox{ for all $ r \in [-\infty, \infty] $.}
\end{align*}

Let $ d \in \N $ be a fixed dimension. We denote by $ |y| $ and $ y \cdot z $ the Euclidean norm of $ y \in \mathbb{R}^d $ and the scalar product of $ y, z \in \R^d $, respectively, i.e., 
\begin{equation*}
\begin{array}{c}
| y | := \sqrt{y_1^2 +\cdots +y_d^2} \mbox{ \ and \ } y \cdot z  := y_1 z_1 +\cdots +y_d z_d, 
\\[1ex]
\mbox{ for all $ y = [y_1, \ldots, y_d], ~ z = [z_1, \ldots, z_d] \in \mathbb{R}^d $.}
\end{array}
\end{equation*}
Besides, we let:
\begin{align*}
    & \mathbb{B}^d := \left\{ \begin{array}{l|l}
        y \in \R^d & |y| < 1
    \end{array} \right\} ~ \mbox{ and } ~ \mathbb{S}^{d -1} := \left\{ \begin{array}{l|l}
        y \in \R^d & |y| = 1
    \end{array} \right\}.
\end{align*}
We denote by $\mathcal{L}^{d}$ the $ d $-dimensional Lebesgue measure, and we denote by $ \mathcal{H}^{d} $ the $ d $-dimensional Hausdorff measure.  In particular, the measure theoretical phrases, such as ``a.e.'', ``$dt$'', and ``$dx$'', and so on, are all with respect to the Lebesgue measure in each corresponding dimension. Also on a Lipschitz-surface $ S $, the phrase ``a.e.'' is with respect to the Hausdorff measure in each corresponding Hausdorff dimension. In particular, if $S$ is $C^1$-surface, then we simply denote by $dS$ the area-element of the integration on $S$.

For a Borel set $ E \subset \R^d $, we denote by $ \chi_E : \R^d \longrightarrow \{0, 1\} $ the characteristic function of $ E $. Additionally, for a distribution $ \zeta $ on an open set in $ \R^d $ and any $i \in \{ 1,\dots,d \}$, let $ \partial_i \zeta$ be the distributional differential with respect to $i$-th variable of $\zeta$. As well as we consider, the differential operators, such as $\nabla,\ \diver, \ \nabla^2$, and so on, are considered in distributional senses.
\bigskip

\noindent
\underline{\textbf{\textit{Abstract notations. (cf. \cite[Chapter II]{MR0348562})}}}
For an abstract Banach space $ X $, we denote by $ |\cdot|_{X} $ the norm of $ X $, and denote by $ \langle \cdot, \cdot \rangle_X $ the duality pairing between $ X $ and its dual $ X^* $. In particular, when $ X $ is a Hilbert space, we denote by $ (\cdot,\cdot)_{X} $ the inner product of $ X $. 

For two Banach spaces $ X $ and $ Y $,  let $  \mathscr{L}(X; Y)$ be the Banach space of bounded linear operators from $ X $ into $ Y $. 

For Banach spaces $ X_1, \dots, X_d $ with $ 1 < d \in \N $, let $ X_1 \times \dots \times X_d $ be the product Banach space endowed with the norm $ |\cdot|_{X_1 \times \cdots \times X_d} := |\cdot|_{X_1} + \cdots +|\cdot|_{X_d} $. However, when all $ X_1, \dots, X_d $ are Hilbert spaces, $ X_1 \times \dots \times X_d $ denotes the product Hilbert space endowed with the inner product $ (\cdot, \cdot)_{X_1 \times \cdots \times X_d} := (\cdot, \cdot)_{X_1} + \cdots +(\cdot, \cdot)_{X_d} $ and the norm $ |\cdot|_{X_1 \times \cdots \times X_d} := \bigl( |\cdot|_{X_1}^2 + \cdots +|\cdot|_{X_d}^2 \bigr)^{\frac{1}{2}} $. In particular, when all $ X_1, \dots,  X_d $ coincide with a Banach space $ Y $, the product space $X_1 \times \dots \times X_d$ is simply denoted by $[Y]^d$.

\begin{rem}\label{rem:NS}
    Due to the smoothness and compactness of $\Gamma$, we may suppose the existence of a finite number $ M \in \N $ and a finite open covering $ \{ U_\ell \}_{\ell = 0}^M $ of $ \Omega $, which fulfill the following conditions. 
  \begin{itemize}
      \item 
      $ \displaystyle \Gamma \subset \bigcup_{\ell=1}^M U_\ell $ and $ \ds \Omega \setminus \bigcup_{\ell=1}^M U_\ell \subset U_0 \subset \overline{U_0} \subset \Omega $, and the distance function 
      \begin{equation}
        d_\Gamma :\, x \in \R^N \mapsto d_\Gamma(x) := \inf_{y \in \Gamma} |y-x| \in [0,\infty)
      \end{equation}
      is $C^\infty$-function on the covering $\ds{\bigcup_{\ell=1}^M U_\ell }$ of $\Gamma$.
  \item There exists a finite set $ \{ r_\ell \}_{\ell = 1}^M \subset (0,\infty)$,  a finite class of functions $ \{ a_\ell \}_{\ell = 1}^M \subset C^\infty(r_\ell \mathbb{B}^{N -1}) $, and a class of congruent transforms $ \{ \Theta_\ell \}_{\ell = 1}^M $ such that:
      \begin{align*}
          \Gamma \cap U_\ell ~& =  \Theta_\ell \left\{ \begin{array}{l|l}
              [z', a_\ell(z')] \in \mathbb{R}^N & 
               \parbox{5.25cm}{$ z' = [z_1, \dots, z_{N -1}] \in r_\ell \mathbb{B}^{N -1} $}
          \end{array} \right\}.
      \end{align*}
      \item for any $ \ell \in \{1, \dots, M\} $, there exists a (small) positive constant $ h_\ell $, such that $ U_\ell $, $ U_\ell \cap \Omega $, and $ U_\ell \cap \overline{\Omega}^\mathrm{\,C} $ are expressed as:
          \begin{align*}
              & \begin{cases}
                  U_\ell = \Theta_\ell  \Xi_\ell W_\ell, \mbox{ with } W_\ell := r_\ell \mathbb{B}^{N -1} \times (-h_\ell, h_\ell),
                  \\[1ex]
                  U_\ell \cap \Omega = \Theta_\ell \Xi_\ell W_\ell^+, \mbox{ with } W_\ell^+ := r_\ell \mathbb{B}^{N -1} \times (0, h_\ell),
                  \\[1ex]
                  U_\ell \cap \overline{\Omega}^\mathrm{\,C} = \Theta_\ell \Xi_\ell W_\ell^-, \mbox{ with } W_\ell^- :=  r_\ell \mathbb{B}^{N -1} \times (-h_\ell, 0),
              \end{cases}
          \end{align*}
          by using the following $ C^\infty $-diffeomorphism: 
          \begin{gather}
              \Xi_\ell: z = [z', z_N] \in W_\ell \mapsto \bigl[ z',  a_\ell (z') \bigr] -z_N \Theta_\ell^{-1} n_\Gamma \in \mathbb{R}^N,
              \\
              \mbox{with $ z' = [z_1, \dots, z_{N -1}] \in \mathbb{B}^{N -1} $.}
          \end{gather}
  \end{itemize}
  Note that when $z_N = 0$, the inverse matrix of Jacobian $(D \Xi_\ell)^{-1}$ can be easily calculated as follows:
          \begin{equation}
            \hspace{-0.15cm}(D \Xi_\ell)^{-1}([z',0]) = \left( \hspace{-0.25cm}\begin{array}{ccccc}
              1-\tilde n_{\Gamma,1}^2 & -\tilde n_{\Gamma,2} \tilde n_{\Gamma,1} &\cdots & -\tilde n_{\Gamma,N-1} \tilde n_{\Gamma,1} & -\tilde n_{\Gamma,N} \tilde n_{\Gamma,1} \\
              -\tilde n_{\Gamma,1} \tilde n_{\Gamma,2} & 1 -\tilde n_{\Gamma,2}^2 &\cdots & -\tilde n_{\Gamma,N-1} \tilde n_{\Gamma,2} & -\tilde n_{\Gamma,N} \tilde n_{\Gamma,2} \\
              \vdots & & \ddots & & \vdots \\
              -\tilde n_{\Gamma,1} \tilde n_{\Gamma,N-1} & -\tilde n_{\Gamma,2}\tilde n_{\Gamma,N-1} &\cdots & 1-\tilde n_{\Gamma,N-1}^2 & -\tilde n_{\Gamma,N} \tilde n_{\Gamma,N-1} \\
              \tilde n_{\Gamma,1} & \tilde n_{\Gamma,2} & \cdots & \tilde n_{\Gamma,N-1} & \tilde n_{\Gamma,N}
            \end{array} \hspace{-0.25cm}\right), \label{D_Xi}
          \end{equation}
          where $\tilde n_\Gamma := \Theta_\ell^{-1} n_\Gamma$ and $\tilde n_{\Gamma,j}$ is the $j$-th element of $\tilde n_\Gamma$.
  As is well-known, $n_\Gamma = \nabla d_\Gamma|_\Gamma$ on $\Gamma$, and hence, we can say $\partial_i n_\Gamma = \partial_i \nabla d_\Gamma|_\Gamma = \nabla \partial_i d_\Gamma|_\Gamma$, for $i = 1,\dots,N$. Moreover, if $u \in H^2(U_\ell \cap \Omega)$, then it holds that:
    \begin{gather}\label{bc0}
      \nabla u \cdot n_\Gamma(x) = - \bigl[\partial_{z_N} (u \circ\Theta_\ell \circ \Xi_\ell)\bigr](z',0), \mbox{ for a.e. } x \in U_\ell \cap \Gamma \mbox{ and } z' \in r_\ell \B^{N-1},
      \\
      \mbox{ satisfying } x = (\Theta_\ell \circ \Xi_\ell) [z',0], \mbox{ and for all } \ell = 1,\dots,M.
    \end{gather}
\end{rem}

\noindent
\underline{\textbf{\textit{Notations in convex analysis.}}}
Let $X$ be an abstract Hilbert space $X$. For a proper, lower semi-continuous (l.s.c.), and convex function $\Psi : \,X \longrightarrow (-\infty, \infty]$ on a Hilbert space $X$, we denote by $D(\Psi)$ the effective domain of $\Psi$. Also, we denote by $\partial \Psi$ the subdifferential of $\Psi$. The set $D(\partial \Psi) := \left\{ z \in X\,|\, \partial \Psi(z) \neq \emptyset \right\}$ is called the domain of $\partial\Psi$. The subdifferential $ \partial \Psi $ is known as a maximal monotone graph in the product space $X \times X$. We often use the notation ``$[z_0, z_0^*] \in \partial \Psi ~{\rm in}~ X \times X$", to mean that ``$z_0^* \in \partial \Psi(z_0) ~{\rm in}~ X~{\rm for}~ z_0 \in D(\partial \Psi)$", by identifying the operator $\partial \Psi$ with its graph in $X\times X$.
\medskip
\begin{ex}
    \label{exConvex}
    For the sequence of real convex functions $\{ \gamma_\varepsilon \}_{\varepsilon \geq 0}$ as in (A3), the following items hold. 
    \begin{description}
        \item[(O)]The subdifferential $ \partial \gamma_0 \subset \R^N \times \R^N $ of the convex function $ \gamma_0 : y = [y_1, \dots, y_n] \in \R^N \mapsto |y| =  \sqrt{y_1^2 + \dots +y_N^2} \in [0, \infty) $ coincides with the following set-valued function $ \Sgn: \R^N \rightarrow 2^{\mathbb{R}^N} $, which is defined as:
\begin{align}\label{Sgn^d}
\Sgn :  y = [y_1, & \dots, y_N] \in \mathbb{R}^N \mapsto \Sgn(y) = \Sgn(y_1, \dots, y_N) 
  \nonumber
  \\
  & := \left\{ \begin{array}{ll}
          \multicolumn{2}{l}{
                  \ds \frac{y}{|y|} = \frac{[y_1, \dots, y_N]}{\sqrt{y_1^2 +\cdots +y_N^2}}, ~ } \mbox{if $ y \ne 0 $,}
                  \\[3ex]
          \overline{\mathbb{B}^N}, & \mbox{otherwise.}
      \end{array} \right.
  \end{align}
\item[(\,I\,)]For every $ \varepsilon > 0 $, the subdifferential $\partial \gamma_\varepsilon$ is identified with the (single-valued) usual gradient, i.e.:
\begin{equation}
    D(\partial \gamma_\varepsilon) = \R^N ~and~\nabla \gamma_\varepsilon : \R^N \ni y \mapsto \nabla \gamma_\varepsilon(y) := \frac{y}{\sqrt{\varepsilon^2 + |y|^2}} \in \R^N.
\end{equation}
Moreover, since:
    \begin{align*}
        \gamma_\varepsilon(y) = \bigl| [\varepsilon, y] \bigr|_{\R^{N +1}} & =\bigl| [\varepsilon, y_1, \dots, y_N] \bigr|_{\R^{N +1}}, \mbox{ for all $ [\varepsilon, y] = [\varepsilon, y_1, \dots, y_N] \in \R^{N +1} $,}
        \\
        & \mbox{with $ \varepsilon \geq 0 $ and $ y = [y_1, \dots, y_N] \in \R^N $,}
    \end{align*}
    it will be estimated that:
    \begin{equation}
      \begin{aligned}
        & 
        \begin{cases} 
            \ds \bigl| \nabla \gamma_\varepsilon(y) \bigr|_{\R^{N}} = \left| \frac{y}{\bigl| [\varepsilon, y] \bigr|_{\mathbb{R}^{N +1}}} \right|_{\R^{N}} \leq \left| \frac{[\varepsilon, y]}{\bigl| [\varepsilon, y] \bigr|_{\mathbb{R}^{N +1}}} \right|_{\R^{N +1}} = 1,
            \\[3ex]
        \ds 
            |\partial_i \partial_j \gamma_\varepsilon(y)| \leq \frac{1}{\varepsilon}, ~ \mbox{ for all $ \varepsilon > 0 $, $ y \in \R^N $, and $ i, j = 1, \dots, N $.}
        \end{cases}
    \end{aligned}\label{exM01}
    \end{equation}
\end{description}
\end{ex}

\begin{ex}
    \label{subdiff2}
    Let $0 \leq \alpha \in H^1(\Omega) \cap L^\infty(\Omega)$ be the fixed function as in (A2), and let $ \{ \gamma_\varepsilon \}_{\varepsilon \geq 0} $ be the sequence of convex functions as in (A3). Then, the following two items hold.
    \begin{description}
      \item[(\,I\,)] Let $\{ \Phi_\varepsilon \}_{\varepsilon \geq 0}$ be a sequence of functionals on $[H]^N$, defined as:
      \begin{equation}
        \Phi_\varepsilon : {\bm w} \in [H]^N \mapsto \Phi_\varepsilon({\bm w}) := \int_\Omega \alpha \gamma_\varepsilon({\bm w}) \,dx \in [0,\infty].
      \end{equation}
      Then, for every $\varepsilon \in [0,\infty)$, $\Phi_\varepsilon$ is proper l.s.c. and convex function, such that
      \begin{equation}
        D(\Phi_\varepsilon) = D(\partial \Phi_\varepsilon) = [H]^N,
      \end{equation}
      and
      \begin{gather}
        \partial \Phi_\varepsilon(\bm{w}) ~ := \left\{ \begin{array}{l}
            \bigl\{ \alpha \nabla \gamma_\varepsilon(\bm{w}) \bigr\}, \mbox{ if $ \varepsilon > 0 $,}
            \\[2ex]
            \left\{ \begin{array}{l|l}
                \alpha \bm{w}^* \in [H]^N & \parbox{2.5cm}{$ \bm{w}^* \in \Sgn(\bm{w}) $ a.e. in $ \Omega $,}
            \end{array} \right\}, \mbox{ if $ \varepsilon = 0 $,}
        \end{array} \right.
        \\
        \mbox{in $[H]^N$, for any $ \bm{w} \in [H]^N $.}
    \end{gather}
    \item[(II)] Let $I \subset (0,T)$ be any open interval, and let $\{ \widehat{\Phi}_\varepsilon^I \}_{\varepsilon \geq 0}$ be a sequence of functionals on $L^2(I;[H]^N) \,( = \bigl[ L^2(I;H) \bigr]^N)$, defined as:
    \begin{equation}
      \widehat{\Phi}_\varepsilon^I : {\bm w} \in L^2(I;[H]^N) \mapsto \widehat{\Phi}_\varepsilon^I ({\bm w}) := \int_I \Phi_\varepsilon({\bm w}(t))\,dt \in [0,\infty].
    \end{equation}
    Then, for every $\varepsilon \in [0,\infty)$, $\widehat{\Phi}_\varepsilon^I$ is proper l.s.c. and convex function, such that
    \begin{equation}
      D(\widehat{\Phi}_\varepsilon^I) = D(\partial \widehat{\Phi}_\varepsilon^I) = L^2(I;[H]^N),
    \end{equation}
    and
    \begin{align}
      \partial \widehat{\Phi}_\varepsilon^I({\bm w}) &= \bigl\{ \tilde{\bm w}^* \in L^2(I;[H]^N) \,|\, \tilde{\bm w}^* (t) \in \partial \Phi_\varepsilon^I({\bm w}(t)) \mbox{ in } [H]^N, \ \mbox{a.e. }t \in I \bigr\}
      \\
      &= \left\{ \begin{array}{l}
        \bigl\{ \alpha \nabla \gamma_\varepsilon(\bm{w}) \bigr\}, \mbox{ if $ \varepsilon > 0 $,}
        \\[2ex]
        \left\{ \begin{array}{l|l}
            \alpha \bm{w}^* \in L^2(I;[H]^N) & \parbox{2.5cm}{$ \bm{w}^* \in \Sgn(\bm{w}) $ a.e. in $ I \times \Omega $,}
        \end{array} \right\}, \mbox{ if $ \varepsilon = 0 $,}
    \end{array} \right.
    \\
    &\mbox{in $L^2(I;[H]^N)$, for any $ \bm{w} \in L^2(I;[H]^N) $.}
    \end{align}
    \end{description}
\end{ex}

\begin{ex}\label{exLaplacian}
  Let $W_0 \subset H^2(\Omega)$ be the closed linear subspace of $H$, as in (A5). Then the operator:
  \begin{equation}
      A_N: \, z \in W_0 \subset H \mapsto A_N z := - \mathit{\Delta} z \in H,
  \end{equation}
  coincides with the subdifferential of the proper, l.s.c., and convex function $\Lambda_N:\, H \longrightarrow [0,\infty]$, defined as:
  \begin{equation}
    \Lambda_N:\, z \in H \mapsto \Lambda_N(z) := \begin{dcases}
      \frac{1}{2} \int_\Omega |\nabla z|^2 dx, & \mbox{if } z \in V, \\
      \infty, & \mbox{otherwise}.
    \end{dcases}
  \end{equation}
    It is known that $ A_N \subset H \times H $ is linear, positive, and self-adjoint, and the domain $ W_0 $ is a Hilbert space, endowed with the inner product:
    \begin{gather}
        (z_1, z_2)_{W_0} := (z_1, z_2)_H +(A_Nz_1, z_2)_H ~ \bigl( = (z_1, z_2)_V \bigr), \mbox{ for $ z_k \in W_0 $, $k = 1, 2$.}
    \end{gather}
    Moreover, there exists a positive constant $ C_0 $ such that:
    \begin{gather}\label{embb01}
        |z|_{H^2(\Omega)}^2 \leq C_0 \bigl( |z|_H^2 +|A_Nz|_H^2 \bigr),~ \mbox{ for all $ z \in W_0 $.}
    \end{gather}
\end{ex}
\medskip

\noindent
\underline{\textbf{\textit{Notations for the time-discretization.}}}
Let $\tau > 0$ be a constant of the time step-size, and let $\{ t_i \}_{i=0}^\infty \subset [0,\infty)$ be the time sequence defined as:
\begin{equation}
  t_i := i\tau,\ i=0,1,2,\ldots.
\end{equation}
Let $X$ be a Banach space. Then, for any sequence $\{ [t_i,z_i] \}_{i=0}^\infty \subset[0,\infty) \times X$, we define the \textit{forward time-interpolation} $[\overline{z}]_\tau \in L^\infty_\mathrm{loc} ([0,\infty); X)$, the \textit{backward time-interpolation} $[\underline{z}]_\tau \in L^\infty_\mathrm{loc} ([0,\infty);X)$ and the \textit{linear time-interpolation} $[z]_\tau \in W^{1,2}_\mathrm{loc} ([0,\infty);X)$, by letting:
\begin{equation}
  \left\{\begin{aligned}
    &[\overline{z}]_\tau(t) := \chi_{(-\infty,0]} z_0 + \sum_{i=1}^\infty \chi_{(t_{i-1}, t_i]}(t) z_i, \\
    &[\underline{z}]_\tau(t) := \sum_{i=0}^\infty \chi_{(t_{i}, t_{i+1}]}(t) z_{i}, \\
    &[z]_\tau (t) := \sum_{i=1}^\infty \chi_{[t_{i-1},t_i)}(t) \left(\frac{t-t_{i-1}}{\tau} z_{i} + \frac{t_i - t}{\tau} z_{i-1}\right),
  \end{aligned}\right. ~{\rm in}~ X,\ {\rm for}~ t \geq 0, \label{eq:time-interpolation}
\end{equation}
respectively.

In the meantime, for any $ q \in [1, \infty) $ and any $ \zeta \in L_\mathrm{loc}^q([0, \infty); X) $, we denote by $ \{ \zeta_i \}_{i = 0}^\infty \subset X $ the sequence of time-discretization data of $ \zeta $, defined as:
\begin{subequations}\label{tI}
\begin{align}\label{tI01}
    & \zeta_0 := 0 \mbox{ in $X$, and }\zeta_i := \frac{1}{\tau} \int_{t_{i -1}}^{t_i} \zeta(\varsigma) \, d \varsigma ~ \mbox{ in $ X $, ~ for $ i = 1, 2, 3, \dots $.}
\end{align}
As is easily checked, the time-interpolations $ [\overline{\zeta}]_\tau, [\underline{\zeta}]_\tau \in L^q_\mathrm{loc}([0, \infty); X) $ for the above $ \{ \zeta_i \}_{i = 0}^\infty $ fulfill that:
\begin{align}\label{tI02}
    & [\overline{\zeta}]_\tau \to \zeta \mbox{ and } [\overline{\zeta}]_\tau \to \zeta \mbox{ in $ L^q_\mathrm{loc}([0, \infty); X) $, as $ \tau \downarrow 0 $.}
\end{align}
\end{subequations}
\medskip

Finally, we mention about a notion of functional convergence, known as ``Mosco-convergence''. 
 
\begin{defn}[Mosco-convergence: cf. \cite{MR0298508}]\label{Def.Mosco}
  Let $ X $ be an abstract Hilbert space. Let $ \Psi : X \rightarrow (-\infty, \infty] $ be a proper, l.s.c., and convex function, and let $ \{ \Psi_n \}_{n = 1}^\infty $ be a sequence of proper, l.s.c., and convex functions $ \Psi_n : X \rightarrow (-\infty, \infty] $, $ n = 1, 2, 3, \dots $.  Then, it is said that $ \Psi_n \to \Psi $ on $ X $, in the sense of Mosco, as $ n \to \infty $, iff. the following two conditions are fulfilled:
  \begin{description}
    \item[(\hypertarget{M_lb}{M1}) The condition of lower-bound:]$ \ds \varliminf_{n \to \infty} \Psi_n(\check{w}_n) \geq \Psi(\check{w}) $, if $ \check{w} \in X $, $ \{ \check{w}_n  \}_{n = 1}^\infty \subset X $, and $ \check{w}_n \to \check{w} $ weakly in $ X $, as $ n \to \infty $. 
    \item[(\hypertarget{M_opt}{M2}) The condition of optimality:]for any $ \hat{w} \in D(\Psi) $, there exists a sequence \linebreak $ \{ \hat{w}_n \}_{n = 1}^\infty  \subset X $ such that $ \hat{w}_n \to \hat{w} $ in $ X $ and $ \Psi_n(\hat{w}_n) \to \Psi(\hat{w}) $, as $ n \to \infty $.
  \end{description}
  As well as, if the sequence of convex functions $ \{ \widehat{\Psi}_\varepsilon \}_{\varepsilon \in \Xi} $ is labeled by a continuous argument $\varepsilon \in \Xi$ with a range $\Xi \subset \mathbb{R}$ , then for any $\varepsilon_{0} \in \Xi$, the Mosco-convergence of $\{ \widehat{\Psi}_\varepsilon \}_{\varepsilon \in \Xi}$, as $\varepsilon \to \varepsilon_{0}$, is defined by those of subsequences $ \{ \widehat{\Psi}_{\varepsilon_n} \}_{n = 1}^\infty $, for all sequences $\{ \varepsilon_n \}_{n=1}^{\infty} \subset \Xi$, satisfying $\varepsilon_{n} \to \varepsilon_{0}$ as $n \to \infty$.
\end{defn}

\begin{rem}\label{Rem.MG}
  Let $ X $, $ \Psi $, and $ \{ \Psi_n \}_{n = 1}^\infty $ be as in Definition~\ref{Def.Mosco}. Then, the following hold.
  \begin{description}
    \item[(\hypertarget{Fact1}{Fact\,1})](cf. \cite[Theorem 3.66]{MR0773850} and \cite[Chapter 2]{Kenmochi81}) Let us assume that
    \begin{equation}\label{Mosco01}
      \Psi_n \to \Psi \mbox{ on $ X $, in the sense of  Mosco, as $ n \to \infty $,}
      \vspace{-1ex}
    \end{equation}
and
\begin{equation*}
\left\{ ~ \parbox{10cm}{
$ [w, w^*] \in X \times X $, ~ $ [w_n, w_n^*] \in \partial \Psi_n $ in $ X \times X $, $ n \in \N $,
\\[1ex]
$ w_n \to w $ in $ X $ and $ w_n^* \to w^* $ weakly in $ X $, as $ n \to \infty $.
} \right.
\end{equation*}
Then, it holds that:
\begin{equation*}
[w, w^*] \in \partial \Psi \mbox{ in $ X \times X $, and } \Psi_n(w_n) \to \Psi(w) \mbox{, as $ n \to \infty $.}
\end{equation*}
    \item[(\hypertarget{Fact2}{Fact\,2})](cf. \cite[Lemma 4.1]{MR3661429} and \cite[Appendix]{MR2096945}) Let $ d \in \mathbb{N} $ denote dimension constant, and let $  S \subset \R^d $ be a bounded open set. Then, under the Mosco-convergence as in \eqref{Mosco01}, a sequence $ \{ \widehat{\Psi}_n^S \}_{n = 1}^\infty $ of proper, l.s.c., and convex functions on $ L^2(S; X) $, defined as:
        \begin{equation*}
            w \in L^2(S; X) \mapsto \widehat{\Psi}_n^S(w) := \left\{ \begin{array}{ll}
                    \multicolumn{2}{l}{\ds \int_S \Psi_n(w(t)) \, dt,}
                    \\[1ex]
                    & \mbox{ if $ \Psi_n(w) \in L^1(S) $,}
                    \\[2.5ex]
                    \infty, & \mbox{ otherwise,}
                \end{array} \right. \mbox{for $ n = 1, 2, 3, \dots $;}
        \end{equation*}
        converges to a proper, l.s.c., and convex function $ \widehat{\Psi}^S $ on $ L^2(S; X) $, defined as:
        \begin{equation*}
            z \in L^2(S; X) \mapsto \widehat{\Psi}^S(z) := \left\{ \begin{array}{ll}
                    \multicolumn{2}{l}{\ds \int_S \Psi(z(t)) \, dt, \mbox{ if $ \Psi(z) \in L^1(S) $,}}
                    \\[2ex]
                    \infty, & \mbox{ otherwise;}
                \end{array} \right. 
        \end{equation*}
        on $ L^2(S; X) $, in the sense of Mosco, as $ n \to \infty $. 
\end{description}
\end{rem}

\begin{ex}[Examples of Mosco-convergence]\label{Rem.ExMG}
    Let $ \varepsilon_0 \geq 0 $ be arbitrary fixed constant, and let $0 \leq \alpha \in H^1(\Omega) \cap L^\infty(\Omega)$ and $\{ \gamma_\varepsilon \}_{\varepsilon \geq 0}$ be as in (A2) and (A3), respectively. Then, the following three items hold.
    \begin{description}
      \item[(O)] $\ds{\gamma_\varepsilon \to \gamma_{\varepsilon_0} \mbox{ on $ \R^N $, in the sense of Mosco, as $ \varepsilon \to \varepsilon_0 $.}}$
      \item[(\,I\,)] Let $\{ \Phi_\varepsilon \}_{\varepsilon \geq 0}$ be the sequence of proper l.s.c. and convex functions on $[H]^N$, as in Example \ref{subdiff2} (I). Then, 
      \begin{equation}
        \Phi_\varepsilon \to \Phi_{\varepsilon_0} \mbox{ on $ [H]^N $, in the sense of Mosco, as $ \varepsilon \to \varepsilon_0 $.}
      \end{equation}
      \item[(II)] Let $I \subset (0,T)$ be an open interval, and let $\{ \widehat{\Phi}_\varepsilon^I \}_{\varepsilon \geq 0}$ be the sequence of proper l.s.c. and convex functions on $L^2(I;[H]^N)$, as a Example \ref{subdiff2} (II). Then, 
      \begin{equation}
        \widehat{\Phi}_\varepsilon^I \to \widehat{\Phi}_{\varepsilon_0}^I \mbox{ on $ L^2(I;[H]^N) $, in the sense of Mosco, as $ \varepsilon \to \varepsilon_0 $.}
      \end{equation}
    \end{description}
\end{ex}

\setcounter{mTh}{0}
\section{Main Theorem}

On the basis of the assumptions and notations as in the previous section, we will set the goal of this paper to prove the following two Main Theorems.

\begin{mTh}[Existence and uniqueness when $\varepsilon > 0$]
    For any $ \varepsilon > 0 $, the relaxed problem (P)$_\varepsilon$ admits a unique solution $ u_\varepsilon : [0,T] \longrightarrow H $, in the following sense.
    \begin{description}
        \item[{\boldmath(S0)$_\varepsilon$}]$ u_\varepsilon \in W^{1, 2}(0, T; W_0)$, and $ u_\varepsilon(0) = u_0 $ in $ H $.
        \item[{\boldmath(S1)$_\varepsilon$}]$ u_\varepsilon $ solves the following 
            evolution equation:
            \begin{align}\label{rx:Ev}
                & 
                \partial_t u_\varepsilon(t) -\mathrm{div}\bigl( \alpha \nabla \gamma_\varepsilon(\nabla u_\varepsilon(t)) +\beta \nabla \partial_t u_\varepsilon(t) \bigr) = f(t) \mbox{ in $ H $, a.e. $ t \in (0, T) $,}
            \end{align}
            subject to:
            \begin{align}\label{rx:BC}
                & \nabla u_\varepsilon(t) \cdot n_\Gamma = \nabla \partial_t u_\varepsilon(t) \cdot n_\Gamma = 0 \mbox{ in $ H^{\frac{1}{2}}(\Gamma) $, for a.e. $ t \in (0, T) $.}
            \end{align}
    \end{description}
\end{mTh}

\begin{mTh}[Existence and uniqueness when $\varepsilon = 0$] 
The problem (P)$_0$, i.e. the problem in case when $ \varepsilon = 0 $, admits a unique solution $ u : [0,T] \longrightarrow H $, in the following sense.
    \begin{description}
        \item[{\boldmath(S0)}]$ u \in W^{1, 2}(0, T; V) \cap L^\infty(0,T; W_0)$, and $ u(0) = u_0 $ in $ H $.
        \item[{\boldmath(S1)}]There exists a function $ \bm{\omega}^* \in \sH $ such that
            \begin{align}\label{org:Rp}
                & \bm{\omega}^*(t) \in \alpha \Sgn(\nabla u(t)) \mbox{ a.e. in $ \Omega $, for a.e. $ t \in (0, T) $,}
            \end{align}
            and $ u $ solves the following evolution equation:
            \begin{align}\label{org:Ev}
                & 
                \partial_t u(t) -\mathrm{div}\bigl( \alpha \bm{\omega}^*(t) +\beta \nabla \partial_t u(t) \bigr) = f(t) \mbox{ in $ H $, a.e. $ t \in (0, T) $,}
            \end{align}
            subject to:
            \begin{align}\label{org:BC}
                \bigl[\bigl( \alpha \bm{\omega}^*(t)  + \beta \nabla \partial_t u(t) \bigr) \cdot n_\Gamma \bigr]_\Gamma = 0 \mbox{ in $ H^{-\frac{1}{2}}(\Gamma) $, for a.e. $ t \in (0, T) $.}
            \end{align}
    \end{description}
\end{mTh}

    In the Main Theorems, the solutions are obtained as the approximating limits of time-discretization scheme of the relaxed problem (P)$_\varepsilon$. In this light, we take a constant $ \tau > 0 $ of the time-step-size, together with $ \varepsilon > 0 $, and adopt the following time-discretization scheme, as our approximating problem.
\bigskip
\begin{description}
    \item[\textmd{(AP)$_\varepsilon^\tau$:}]to find a time-sequence of functions $ \{ u_{\varepsilon,i} \}_{i = 1}^\infty \subset W_0 $, which fulfills the following time-discretization scheme:
    \begin{align*}
        & \begin{cases}
            \displaystyle \frac{1}{\tau} (u_{\varepsilon,i} -u_{\varepsilon,i-1}) -\mathrm{div} \left( \alpha \nabla \gamma_\varepsilon(\nabla u_{\varepsilon,i}) +\beta \nabla \left( \frac{u_{\varepsilon,i} -u_{\varepsilon,i-1}}{\tau} \right) \right) = f_i ~\mbox{ in $ \Omega $,}
            \\[1ex]
            \nabla u_{\varepsilon,i}|_\Gamma \cdot n_\Gamma = 0 ~\mbox{ on $ \Gamma $, ~ $ i = 1, 2, 3, \dots $,}
        \end{cases}
        \\
        & \hspace{15ex} \mbox{subject to the initial condition $ u_{\varepsilon,0} = u_0 \in W_0 $ in $\Omega$.}
    \end{align*}
\end{description}
In view of this, we devote the next section to the study of auxiliary elliptic boundary value problem for the approximating problem (AP)$_\varepsilon^\tau$.

\section{Auxiliary problem}
In this section, we consider the following boundary value problem, denoted by (E):
\begin{equation}
  \mathrm{(E)}\ \ \begin{dcases}
    -\diver \left( \alpha \nabla\gamma_\varepsilon (\nabla u) + \beta \nabla u \right) + \alpha^\circ u = f^\circ \quad {\rm in} \ \Omega, \\[1ex]
    \trace{\nabla u} \cdot n_\Gamma = 0 \quad {\rm on}\ \Gamma.
  \end{dcases}
\end{equation}
In this context, $f^\circ \in L^2(\Omega)$ is a given function, $0 \leq \alpha \in H^1(\Omega) \cap L^\infty(\Omega)$ and $0 < \beta \in W^{1,\infty}(\Omega)$ are fixed functions as in (A2), and $\alpha^\circ: \Omega \longrightarrow (0,\infty)$ belongs to $L^\infty(\Omega)$, satisfying:
\begin{equation}
  \alpha^\circ \in L^\infty(\Omega), \mbox{ and } \essinf \alpha^\circ(\Omega) > 0.
\end{equation}
Additionally, we set:
\begin{equation}
  \delta_0 := \essinf \alpha^\circ (\Omega) \wedge \inf \beta(\Omega) > 0. \label{delta0}
\end{equation}

\begin{defn}
  A function $u:\Omega \longrightarrow \R$ is called a solution to (E), iff. $u \in V$, and:
  \begin{equation}
    \left(\alpha \nabla \gamma_\varepsilon (\nabla u) + \beta \nabla u, \nabla \varphi\right)_{[H]^N} + \left( \alpha^\circ u , \varphi\right)_H = (f^\circ,\varphi)_H, \quad {\rm for \, all}~  \varphi \in V. \label{eq:sol.of.E}
  \end{equation}
\end{defn}

Now we set the goal of this section to prove  the following theorem concerned with strong $H^2$-regularity of solution to (E).

\begin{thm}\label{(E)sol.}
    The problem (E) admits a unique solution $ u \in V $ such that $ u \in W_0 $.
\end{thm}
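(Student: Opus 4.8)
The plan is to establish existence and uniqueness by a direct variational argument, and then to upgrade the $H^2$-regularity by the Nirenberg difference-quotient technique, treating the interior and a boundary neighbourhood separately.

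\emph{Existence and uniqueness in $V$.} I would introduce the functional $J : V \to \R$,
$$ J(v) := \int_\Omega \alpha\,\gamma_\varepsilon(\nabla v)\,dx + \frac{1}{2}\int_\Omega \beta\,|\nabla v|^2\,dx + \frac{1}{2}\int_\Omega \alpha^\circ\,|v|^2\,dx - \int_\Omega f^\circ v\,dx . $$
Since $\alpha \geq 0$ and $\gamma_\varepsilon$ is convex, the first term is convex and finite on $V$ (because $\gamma_\varepsilon(\nabla v) \leq \varepsilon + |\nabla v|$ and $\alpha \in L^\infty(\Omega)$); the two quadratic terms are strictly convex, and by $\beta \geq \delta_0 > 0$, $\alpha^\circ \geq \delta_0 > 0$ from \eqref{delta0} they make $J$ coercive on $V = H^1(\Omega)$. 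As $J$ is moreover lower semicontinuous, the direct method yields a unique minimizer $u \in V$, whose Euler--Lagrange identity is exactly the weak formulation \eqref{eq:sol.of.E}; uniqueness also follows from strict monotonicity of the associated operator. This gives the first assertion $u \in V$.

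\emph{Interior regularity.} Testing \eqref{eq:sol.of.E} with $\varphi = -D_{-h}^s(\zeta^2 D_h^s u)$, where $D_h^s$ is the difference quotient in a coordinate direction $e_s$ and $\zeta \in C_c^\infty(\Omega)$ is a cut-off, one differentiates the flux discretely. Writing $D_h^s(\nabla\gamma_\varepsilon(\nabla u)) = A_h\,D_h^s(\nabla u)$ with $A_h := \int_0^1 D^2\gamma_\varepsilon(\nabla u + \theta\, h\, D_h^s\nabla u)\,d\theta \geq 0$ symmetric positive semidefinite, the principal contribution $\int \zeta^2 (\alpha^h A_h + \beta^h I)\,D_h^s\nabla u \cdot D_h^s\nabla u$ is bounded below by $\delta_0 \int \zeta^2 |D_h^s\nabla u|^2$, using $\beta \geq \delta_0$ and $\alpha \geq 0$. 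The decisive structural point is that the remainder carrying the derivative of the merely $H^1$ coefficient $\alpha$ appears multiplied by $\nabla\gamma_\varepsilon(\nabla u)$, and by \eqref{exM01} we have $|\nabla\gamma_\varepsilon| \leq 1$; hence $(D_h^s\alpha)\,\nabla\gamma_\varepsilon(\nabla u)$ is bounded in $L^2$ uniformly in $h$. The term carrying $\nabla\beta$ is controlled by $\beta \in W^{1,\infty}(\Omega)$. Absorbing the $\zeta^2|D_h^s\nabla u|^2$ contributions by Young's inequality gives an $h$-uniform bound, and letting $h \downarrow 0$ yields $\nabla^2 u \in L^2_{\mathrm{loc}}(\Omega)$.

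\emph{Boundary regularity and $u \in W_0$.} Near $\Gamma$ I would flatten the boundary with the charts $\Theta_\ell \circ \Xi_\ell$ of Remark~\ref{rem:NS} and take difference quotients in the tangential directions $z_1,\dots,z_{N-1}$, which are compatible with the homogeneous conormal condition. The same coercivity and the same absorption of the $H^1$-coefficient $\alpha$ via $|\nabla\gamma_\varepsilon| \leq 1$ give $L^2$-bounds on all tangential components of $\nabla^2 u$; the purely normal second derivative is then recovered algebraically from the equation, since the coefficient of $\partial_{z_N}^2 u$ is bounded below by $\beta \geq \delta_0 > 0$ and the remaining terms already lie in $L^2$. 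Patching over the covering $\{U_\ell\}$ yields $u \in H^2(\Omega)$. Finally, with $u \in H^2(\Omega)$ the trace $\nabla u \cdot n_\Gamma$ is well defined in $H^{1/2}(\Gamma)$; integrating \eqref{eq:sol.of.E} by parts recovers the natural condition $(\alpha\nabla\gamma_\varepsilon(\nabla u) + \beta\nabla u)\cdot n_\Gamma = 0$, and since $\nabla\gamma_\varepsilon(\nabla u) = \nabla u / \sqrt{\varepsilon^2 + |\nabla u|^2}$ is parallel to $\nabla u$, the positive factor $\alpha/\sqrt{\varepsilon^2+|\nabla u|^2} + \beta \geq \delta_0 > 0$ divides out, giving $\nabla u \cdot n_\Gamma = 0$, i.e. $u \in W_0$.

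\emph{Main obstacle.} The hard part is the boundary estimate: simultaneously managing the curved geometry through the charts of Remark~\ref{rem:NS}, the merely $H^1$ coefficient $\alpha$, and the nonlinear flux, while keeping the tangential difference quotients compatible with the conormal boundary condition. The structural bound $|\nabla\gamma_\varepsilon| \leq 1$ is precisely what renders the low regularity of $\alpha$ harmless, and $\beta \geq \delta_0 > 0$ is what guarantees uniform ellipticity (in particular in the normal direction) despite the possible degeneracy of $D^2\gamma_\varepsilon$.
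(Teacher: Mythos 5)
Your existence-and-uniqueness step coincides with the paper's: both minimize the same coercive, strictly convex functional (the paper's $\Upsilon$ in \eqref{defOfPhi}) on $V$ and read off \eqref{eq:sol.of.E} as its Euler--Lagrange identity. For the $H^2$-regularity, however, you take a genuinely different route. The paper never uses difference quotients: it introduces the fourth-order relaxation (E)$_\kappa$ with penalty $\kappa\mathit{\Delta}^2$, shows in Lemma \ref{sol. of approx. prob.} that its minimizer satisfies $u_\kappa\in W_0$ with $\mathit{\Delta}u_\kappa\in W_0$ (so that $A_N u_\kappa$ is an admissible test function), tests with $A_N u_\kappa$, and controls the critical term $\bigl(\diver(\alpha\nabla\gamma_\varepsilon(\nabla u_\kappa)),\mathit{\Delta}u_\kappa\bigr)_H$ via the integration-by-parts estimate of Lemma \ref{est99}, whose boundary contributions are tamed by the charts of Remark \ref{rem:NS}, the vanishing of $I_{\Gamma,2}$, the trace interpolation of Lemma \ref{Ehrling}, and the density result of Lemma \ref{lem:approx.}; the $\kappa$-uniform bound \eqref{est_H24} then passes to the limit $\kappa\downarrow 0$. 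You instead run the Nirenberg difference-quotient scheme directly on the $V$-solution, with tangential quotients in flattened charts and recovery of the normal second derivative from the equation. Both routes are viable, and your two structural observations --- that $|\nabla\gamma_\varepsilon|\le 1$ neutralizes the merely-$H^1$ coefficient $\alpha$, and that $\beta\ge\delta_0$ supplies uniform ellipticity despite the degeneracy of $\nabla^2\gamma_\varepsilon$ --- are exactly the mechanisms driving Lemma \ref{est99}.

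Two remarks. First, the phrase ``recovered algebraically from the equation'' glosses a real step: one cannot simply divide by the coefficient of $\partial_{z_N}^2 u$, since expanding the divergence by the chain rule already presupposes $u\in H^2$. The standard repair is to note that the equation gives the normal flux component $\alpha\,[\partial_N\gamma_\varepsilon](\nabla u)+\beta\,\partial_N u$ an $L^2$ derivative in $z_N$, and then to invert the map $q\mapsto\alpha\,[\partial_N\gamma_\varepsilon](\nabla' u,q)+\beta q$, which is bi-Lipschitz with inverse constant $1/\delta_0$ by $\alpha\,\nabla^2\gamma_\varepsilon\ge 0$ and $\beta\ge\delta_0$; this should be said explicitly, and it also absorbs the fact that $\alpha$ is only in $H^1\cap L^\infty$. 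Second, the paper's detour through (E)$_\kappa$ and Lemma \ref{est99} is not gratuitous: that lemma is reused verbatim in Lemmas \ref{Lem2} and \ref{Lem3} to obtain $\varepsilon$- and $\tau$-uniform $H^2$ bounds for the time-discrete scheme, whereas the constants in your difference-quotient argument (e.g.\ through $|\nabla^2\gamma_\varepsilon|\le 1/\varepsilon$ in the cross terms with $\nabla\zeta$) may degenerate as $\varepsilon\downarrow 0$. This is harmless for Theorem \ref{(E)sol.} itself, but it explains why the paper organizes the regularity proof around Lemma \ref{est99} rather than around difference quotients.
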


For the proof of Theorem 1, we take a relaxation argument (constant) $ \kappa > 0  $, and prepare the following relaxed problem, denoted by (E)$_\kappa$.
\begin{equation}
  {\rm (E)}_\kappa \ \begin{dcases}
    \kappa \Lap ^2 u -\diver \left( \alpha \gamma_\varepsilon(\nabla u) + \beta \nabla u \right) + \alpha^\circ u = f^\circ \quad {\rm in} \ \Omega, \\[1ex]
      \trace{\nabla u} \cdot n_\Gamma = 0 ~ \mbox{ and } ~ \trace{\nabla (\Lap u)} \cdot n_\Gamma = 0, ~ \mbox{ on } \Gamma.
  \end{dcases}
\end{equation}

\begin{defn}
    For each $ \kappa > 0 $, a function $u_\kappa: \Omega \longrightarrow \R$ is called a solution to the relaxed problem (E)$_\kappa$, or simply relaxed solution, iff. $u_\kappa \in W_0$, and:
  \begin{equation}
    \begin{aligned}
        & \kappa \left(\Lap u_\kappa, \Lap \varphi\right)_H + \left(\alpha \nabla \gamma_\varepsilon(\nabla u_\kappa) + \beta \nabla u_\kappa, \nabla \varphi\right)_{[H]^N} + \left( \alpha^\circ u_\kappa , \varphi\right)_H
        \\
        & \hspace{14ex} = (f^\circ,\varphi)_H, \mbox{ \ for any}~ \varphi \in W_0.
    \end{aligned}\label{eq:sol.of.Ekappa}
  \end{equation}
\end{defn}

Now, before we deal with Theorem \ref{(E)sol.}, we verify some lemmas concerned with key-properties of relaxed solutions.

\begin{lem}\label{sol. of approx. prob.}
  For each $\kappa > 0$, the problem (E)$_\kappa$ admits a unique solution $u_\kappa \in W_0$, such that $\Lap u_\kappa \in W_0$.
\end{lem}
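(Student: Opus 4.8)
The plan is to obtain existence and uniqueness by the direct method of the calculus of variations, and then to bootstrap the extra regularity $\Lap u_\kappa \in W_0$ directly from the variational identity \eqref{eq:sol.of.Ekappa}, exploiting the self-adjointness of $A_N$. For existence and uniqueness I would introduce the functional $J_\kappa : W_0 \to \R$ defined by
\begin{equation*}
  J_\kappa(z) := \frac{\kappa}{2}|A_N z|_H^2 + \Phi_\varepsilon(\nabla z) + \frac12 \int_\Omega \beta |\nabla z|^2\,dx + \frac12 \int_\Omega \alpha^\circ |z|^2\,dx - (f^\circ, z)_H,
\end{equation*}
noting that $|A_N z|_H = |\Lap z|_H$ and that $\Phi_\varepsilon(\nabla z) = \int_\Omega \alpha\gamma_\varepsilon(\nabla z)\,dx$ is finite, proper, l.s.c., and convex by Example~\ref{subdiff2}(I). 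Each term is convex and the quadratic ones are strictly convex, so $J_\kappa$ is strictly convex; using $\beta \geq \delta_0$, $\alpha^\circ \geq \delta_0$ (cf. \eqref{delta0}) and $\Phi_\varepsilon \geq 0$ one gets $J_\kappa(z) \geq \frac{\kappa}{2}|A_N z|_H^2 + \frac{\delta_0}{2}|z|_H^2 - |f^\circ|_H |z|_H$, whence \eqref{embb01} together with Young's inequality yields coercivity in the $H^2$-topology of $W_0$. The direct method then produces a unique minimizer $u_\kappa \in W_0$; computing the Gateaux derivative, with $\partial\Phi_\varepsilon(\nabla z) = \{\alpha\nabla\gamma_\varepsilon(\nabla z)\}$ for $\varepsilon > 0$, shows that $u_\kappa$ solves \eqref{eq:sol.of.Ekappa}, and strict convexity gives uniqueness. (Equivalently, one may realize the left-hand side of \eqref{eq:sol.of.Ekappa} as a strictly monotone, coercive, continuous operator $W_0 \to W_0^*$ and invoke the Browder--Minty theorem.)

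For the regularity of $\Lap u_\kappa$ I would first upgrade the flux. Put $\bm G := \alpha\nabla\gamma_\varepsilon(\nabla u_\kappa) + \beta\nabla u_\kappa$. Since $u_\kappa \in W_0 \subset H^2(\Omega)$, one has $\nabla u_\kappa \in [V]^N$; because $\varepsilon > 0$, the map $y \mapsto \nabla\gamma_\varepsilon(y) = y/\sqrt{\varepsilon^2 + |y|^2}$ is globally Lipschitz and bounded by $1$ (cf. \eqref{exM01}), so the chain rule gives $\nabla\gamma_\varepsilon(\nabla u_\kappa) \in [V]^N \cap [L^\infty(\Omega)]^N$. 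Multiplying by $\alpha \in H^1(\Omega) \cap L^\infty(\Omega)$ and by $\beta \in W^{1,\infty}(\Omega)$ preserves the $H^1$-regularity, so $\bm G \in [V]^N$ and in particular $\diver \bm G \in H$.

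The crucial point is that the normal flux vanishes: on $\Gamma$ we have $\nabla u_\kappa \cdot n_\Gamma = 0$ by the definition of $W_0$, and since $\nabla\gamma_\varepsilon(\nabla u_\kappa)$ is a scalar multiple of $\nabla u_\kappa$, also $\nabla\gamma_\varepsilon(\nabla u_\kappa)\cdot n_\Gamma = 0$; hence $\bm G \cdot n_\Gamma = 0$ in $H^{\frac12}(\Gamma)$. Integrating by parts therefore produces no boundary term, giving $(\bm G, \nabla\varphi)_{[H]^N} = -(\diver \bm G, \varphi)_H$ for all $\varphi \in W_0$. Substituting this into \eqref{eq:sol.of.Ekappa} and writing $\Lap\varphi = -A_N\varphi$, I obtain $(A_N\varphi, -\kappa\Lap u_\kappa)_H = (\tilde h, \varphi)_H$ for all $\varphi \in D(A_N) = W_0$, where $\tilde h := f^\circ - \alpha^\circ u_\kappa + \diver \bm G \in H$. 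Since $A_N$ is self-adjoint (Example~\ref{exLaplacian}), this identity forces $-\kappa\Lap u_\kappa \in D(A_N) = W_0$ with $A_N(-\kappa\Lap u_\kappa) = \tilde h$; in particular $\Lap u_\kappa \in W_0$, as claimed.

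The main obstacle is this last step: verifying that $\bm G \cdot n_\Gamma = 0$ so that the integration by parts leaves no boundary residue, and then upgrading the resulting $L^2$-duality identity to the membership $\Lap u_\kappa \in D(A_N)$ via self-adjointness. Both points rely decisively on $\varepsilon > 0$ (through the Lipschitz bound that places $\bm G$ in $[V]^N$); the same computation degenerates at $\varepsilon = 0$, which is exactly why the singular case must be treated separately.
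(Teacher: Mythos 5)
Your proof is correct, and the existence--uniqueness part coincides with the paper's (the paper minimizes the same strictly convex, coercive functional $\Upsilon_\kappa$, merely extended by $+\infty$ outside $W_0$). The regularity step reaches the same conclusion by a slightly different mechanism: the paper sets $F_\kappa := \diver \bm{G} - \alpha^\circ u_\kappa + f^\circ + A_N u_\kappa$, solves the resolvent equation $\kappa A_N w_\kappa + w_\kappa = F_\kappa$ by Minty's theorem to get $w_\kappa \in W_0$, and then shows $A_N u_\kappa = w_\kappa$ by testing against arbitrary $z \in H$ through $\varphi_z = (\kappa A_N + I)^{-1} z$; you instead read the identity $\kappa(\Lap u_\kappa, \Lap \varphi)_H = (\tilde h, \varphi)_H$ for all $\varphi \in D(A_N)$ as the statement that $-\kappa \Lap u_\kappa$ lies in $D(A_N^*) = D(A_N) = W_0$ by self-adjointness. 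Both arguments hinge on exactly the same two facts, namely $\diver \bm{G} \in H$ and the vanishing of the normal flux $\bm{G} \cdot n_\Gamma$ on $\Gamma$ (so that the integration by parts $(\bm{G}, \nabla \varphi)_{[H]^N} = -(\diver \bm{G}, \varphi)_H$ holds for non-compactly-supported $\varphi \in W_0$); the paper uses these implicitly in passing from the line containing $-(\alpha \nabla\gamma_\varepsilon(\nabla u_\kappa) + \beta \nabla u_\kappa, \nabla \varphi_z)_{[H]^N}$ to $(F_\kappa, \varphi_z)_H$, whereas you verify them explicitly via the Lipschitz chain rule for $\nabla\gamma_\varepsilon$ (valid only for $\varepsilon > 0$) and the collinearity of $\nabla\gamma_\varepsilon(\nabla u_\kappa)$ with $\nabla u_\kappa$. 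Your version is arguably more transparent about where $\varepsilon > 0$ enters; the paper's resolvent formulation avoids any appeal to the non-invertibility issue of $A_N$ alone, which your adjoint argument also sidesteps correctly.
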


\begin{proof}
  First, we fix arbitrary $\kappa > 0$, and define a functional $\Upsilon_\kappa:\, H \longrightarrow (-\infty, \infty]$ as follows.
  \begin{equation}
    \Upsilon_\kappa:\, z \in H \mapsto \Upsilon_\kappa(z) := \left\{ \begin{aligned}
      &\frac{\kappa}{2} \int_\Omega |\Lap z|^2 \,dx + \int_{\Omega} \left(\alpha \gamma_\varepsilon(\nabla z) + \frac{\beta}{2} |\nabla z|^2 \right) \,dx \\
      &\quad + \frac{1}{2}\int_\Omega \alpha^\circ |z|^2\,dx - \int_\Omega f^\circ z \,dx \quad {\rm if}~ z \in W_0,\\[1ex]
      &+\infty \quad {\rm otherwise}.
    \end{aligned} \right.
  \end{equation}
  As is easily checked, $\Upsilon_\kappa$ is proper, l.s.c., coercive, and strictly convex on $H$, and the unique minimizer $u_\kappa \in W_0$ solves the relaxed problem (E)$_\kappa$.

  Next, we show $\Lap u_\kappa \in W_0$. To this end, we put:
  \begin{align}
    \left\{ \begin{lgathered}
      v_\kappa := A_N u_\kappa \, (= -\Lap u_\kappa), \mbox{ and}
      \\
      F_\kappa := \diver \bigl( \alpha \nabla \gamma_\varepsilon(\nabla u_\kappa) + \beta \nabla u_\kappa \bigr) - \alpha^\circ u_\kappa + f^\circ + v_\kappa,
    \end{lgathered} \right.\ \mbox{in } H, \label{ken.0707_00}
  \end{align}
  and verify $v_\kappa$ coincides with the unique solution $w_\kappa \in W_0$ to the elliptic boundary value problem:
  \begin{equation}
    \kappa A_N w_\kappa + w_\kappa = F_\kappa \ \mbox{ in } H. \label{ken.0707_01}
  \end{equation}

    Let us take any $z \in H$. Then, by Example \ref{exLaplacian} and Minty's theorem (cf. \cite[Theorem 2.2]{MR2582280} and \cite[Proposition 2.2]{MR0348562}), there exists a (unique) function $\varphi_z \in W_0$ such that
\begin{equation}
  \kappa A_N \varphi_z + \varphi_z = z \ \mbox{ in } H. \label{ken.0707_02}
\end{equation}
    \noeqref{ken.0707_01}
    By using \eqref{eq:sol.of.Ekappa}--\eqref{ken.0707_02}, the coincidence $u_\kappa = w_\kappa \ (\in W_0)$ will be verified as follows.
\begin{align}
  (v_\kappa, z)_H &= \bigl( A_N u_\kappa, \kappa A_N \varphi_z + \varphi_z \bigr)_H = \kappa (\Lap u_\kappa, \Lap \varphi_z)_H - (\Lap u_\kappa, \varphi_z)_H
  \\
  &= -\bigl( \alpha \nabla \gamma_\varepsilon(\nabla u_\kappa) + \beta \nabla u_\kappa, \nabla \varphi_z  \bigr)_{[H]^N} - (\alpha^\circ u_\kappa, \varphi_z)_H + (f^\circ, \varphi_z)_H - (\Lap u_\kappa, \varphi_z)_H
  \\
  &= (F_\kappa, \varphi_z)_H = (-\kappa \Lap w_\kappa + w_\kappa, \varphi_z)_H = (w_\kappa, -\kappa \Lap \varphi_z)_H + (w_\kappa, \varphi_z)_H
  \\
  &= (w_\kappa, \kappa A_N \varphi_z + \varphi_z )_H = (w_\kappa, z)_H, \ \mbox{for any } z \in H.
\end{align}

Thus we conclude this lemma.
\end{proof}

\begin{lem}\label{est99}
  Let us fix $\varepsilon > 0$ and $v \in W_0$. Then, for any $\delta > 0$, there exists a constant $C_1(\delta)$, depending on $\delta$, such that:
  \begin{itemize}
    \item the constant $C_1(\delta)$ is independent of $\varepsilon$ and $v$;
    \item $\bigl( \diver(\alpha \nabla \gamma_\varepsilon(\nabla v)), \Lap v \bigr)_H \geq - \delta|\nabla^2 v|_{[H]^{N \times N}}^2 - C_1(\delta) \bigl( |v|_V^2 + 1 \bigr)$.
  \end{itemize}
\end{lem}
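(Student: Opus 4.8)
The plan is to integrate by parts in $\bigl(\diver(\alpha\nabla\gamma_\varepsilon(\nabla v)),\Lap v\bigr)_H$ so as to expose a bulk term that is nonnegative by the convexity of $\gamma_\varepsilon$, and then to show that the two resulting boundary integrals either vanish outright or collapse---thanks to the condition $\nabla v\cdot n_\Gamma=0$---to purely first-order quantities on $\Gamma$. Since $\nabla^2 v$ has no trace when $v$ lies only in $H^2(\Omega)$, I would first establish the inequality for smooth $v$ and then extend it by density: a general $v\in W_0$ is approximated in $W_0$ (equivalently, in $H^2(\Omega)$ via \eqref{embb01}) by finite linear combinations of eigenfunctions of $A_N$, which are $C^\infty(\overline{\Omega})$ and lie in $W_0$. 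For fixed $\varepsilon>0$ both sides of the asserted inequality are continuous along such approximations, because $\nabla\gamma_\varepsilon$ is globally Lipschitz with $|\partial_i\partial_j\gamma_\varepsilon|\le 1/\varepsilon$ (see \eqref{exM01}).

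For smooth $v$, set $\bm{\Phi}:=\alpha\nabla\gamma_\varepsilon(\nabla v)$ and apply Green's formula twice---once to move the divergence onto $\Lap v$, and once more to the arising third-order term $\sum_{i,m}\Phi_i\,\partial_m\partial_m\partial_i v$---to obtain
\begin{equation*}
\bigl(\diver\bm{\Phi},\Lap v\bigr)_H=\int_\Omega\sum_{i,m}(\partial_m\Phi_i)(\partial_i\partial_m v)\,dx+\int_\Gamma(\bm{\Phi}\cdot n_\Gamma)\,\Lap v\,dS-\int_\Gamma\sum_i\Phi_i\,(\nabla\partial_i v\cdot n_\Gamma)\,dS.
\end{equation*}
Inserting $\partial_m\Phi_i=(\partial_m\alpha)\,\partial_i\gamma_\varepsilon(\nabla v)+\alpha\sum_k(\partial_k\partial_i\gamma_\varepsilon)(\nabla v)\,\partial_m\partial_k v$ splits the bulk integral. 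The Hessian piece equals $\int_\Omega\alpha\sum_m(\partial_m\nabla v)^{\top}\bigl(\nabla^2\gamma_\varepsilon(\nabla v)\bigr)(\partial_m\nabla v)\,dx\ge 0$, since $\nabla^2\gamma_\varepsilon\ge 0$ and $\alpha\ge 0$; this is the decisive gain and is simply discarded in a lower bound. The $\nabla\alpha$ piece is bounded below by $-\int_\Omega|\nabla\alpha|\,|\nabla^2 v|\,dx$, where I use only the $\varepsilon$-independent bound $|\nabla\gamma_\varepsilon|\le 1$, and then by $-\delta|\nabla^2 v|_{[H]^{N\times N}}^2-C(\delta)$ via Young's inequality, with the fixed quantity $|\nabla\alpha|_{[H]^N}^2$ absorbed into $C(\delta)$.

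The boundary integrals are the main obstacle. The first one vanishes identically, because $\bm{\Phi}\cdot n_\Gamma=\alpha\,(\nabla v\cdot n_\Gamma)/\sqrt{\varepsilon^2+|\nabla v|^2}=0$ on $\Gamma$. For the second, I extend the normal near $\Gamma$ by $\nu:=\nabla d_\Gamma$, so that $\nu=n_\Gamma$ and $\partial_i\nu_m=\partial_i\partial_m d_\Gamma$ are bounded on $\Gamma$ (Remark~\ref{rem:NS}, using $d_\Gamma\in C^\infty$ near $\Gamma$). By the symmetry of $\nabla^2 v$,
\begin{equation*}
\sum_i\partial_i\gamma_\varepsilon(\nabla v)\,(\nabla\partial_i v\cdot n_\Gamma)=\nabla\gamma_\varepsilon(\nabla v)\cdot\nabla(\nabla v\cdot\nu)-\sum_{i,m}\partial_i\gamma_\varepsilon(\nabla v)\,(\partial_i\nu_m)\,\partial_m v\quad\text{on }\Gamma.
\end{equation*}
Since $\nabla v\cdot\nu\equiv 0$ on $\Gamma$, its gradient there is normal, whence the first summand equals $[\nabla(\nabla v\cdot\nu)\cdot n_\Gamma]\,(\nabla\gamma_\varepsilon(\nabla v)\cdot n_\Gamma)=0$. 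Only the bounded-curvature term remains, and with $|\nabla\gamma_\varepsilon|\le 1$ and $\alpha\in L^\infty(\Omega)$ this boundary integral is dominated in absolute value by $C\int_\Gamma|\nabla v|^2\,dS+C$.

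Finally, the trace-interpolation inequality $\int_\Gamma|\nabla v|^2\,dS\le\delta'|\nabla^2 v|_{[H]^{N\times N}}^2+C(\delta')|v|_V^2$ converts this into $\delta'|\nabla^2 v|_{[H]^{N\times N}}^2+C(\delta')(|v|_V^2+1)$. Collecting the bulk and boundary contributions and relabeling the free parameters $\delta,\delta'$ yields the assertion; crucially, every constant depends only on $\Gamma$, $\alpha$, and $\delta$, not on $\varepsilon$ or $v$, precisely because the argument rests on the uniform gradient bound $|\nabla\gamma_\varepsilon|\le 1$ and the fixed geometry of $\Gamma$ rather than on the $\varepsilon$-dependent Hessian bound.
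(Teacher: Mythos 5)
Your proof is correct and follows essentially the same route as the paper's: the same double integration by parts, the nonnegativity of the Hessian bulk term by convexity of $\gamma_\varepsilon$, the vanishing of the critical boundary term because $\nabla\gamma_\varepsilon(\nabla v)$ is tangential while $\nabla(\nabla v\cdot n_\Gamma)$ is normal on $\Gamma$, and the trace-interpolation estimate for the remaining curvature term. The only (harmless) deviations are that you justify the reduction to smooth $v$ by spectral approximation with eigenfunctions of $A_N$ rather than by the paper's Appendix Lemma~\ref{lem:approx.}, and that you argue the boundary cancellation in a coordinate-free way via the extension $\nu=\nabla d_\Gamma$ instead of the paper's local charts.
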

\begin{proof}
  In the light of Lemma \ref{lem:approx.} in Appendix, we derive the following fact:
  \begin{itemize}
    \item[$\sharp$1)] $C^\infty(\overline{\Omega}) \cap W_0$ is dense in $W_0$, in the topology of $H^2(\Omega)$.
  \end{itemize}
  So, to prove this lemma, it is sufficient to verify only in the case when $v \in C^\infty(\overline{\Omega}) \cap W_0$. In this case, we can compute that:
  \begin{align}
    &\left(\diver\left(\alpha \nabla \gamma_\varepsilon(\nabla v) \right), \Lap v\right)_H
    \\
    &\qquad = -\sum_{i=1}^N \sum_{j=1}^N \int_\Omega \alpha [\partial_i \gamma_\varepsilon](\nabla v)\partial_i \left( \partial_j^2 v\right) \,dx 
    \\
    &\qquad = -\sum_{i=1}^N \int_\Omega \alpha [\partial_i \gamma_\varepsilon](\nabla v) \diver\left( \nabla \partial_i v\right) \,dx 
    \\
    &\qquad = \sum_{i=1}^N \int_\Omega \nabla \bigl( \alpha [\partial_i \gamma_\varepsilon](\nabla v)\bigr) \cdot \nabla \partial_i v \, \,dx -\sum_{i=1}^N \int_\Gamma \bigl( \alpha [\partial_i \gamma_\varepsilon](\nabla v)\bigr) \left(\nabla \partial_i v \cdot n_\Gamma \right) \, d\Gamma
    \\
      &\qquad = \sum_{i=1}^N \int_\Omega [\partial_i \gamma_\varepsilon](\nabla v) (\nabla \alpha \cdot \nabla \partial_i v) \,dx + \sum_{i=1}^N \int_\Omega \alpha \nabla \bigl( [\partial_i \gamma_\varepsilon](\nabla v) \bigr) \cdot \nabla \partial_i v \,dx
    \\
    &\qquad \qquad + \sum_{i=1}^N \int_\Gamma \alpha [\partial_i \gamma_\varepsilon](\nabla v) (\nabla v \cdot \partial_i n_\Gamma) \,d\Gamma - \sum_{i=1}^N \int_\Gamma \alpha [\partial_i \gamma_\varepsilon](\nabla v) \partial_i (\nabla v \cdot n_\Gamma) \,d\Gamma
    \\
    &\qquad =: I_1 + I_2 + I_{\Gamma,1} - I_{\Gamma,2}. \label{ken.0708_01}
  \end{align}
  
  Let us take any $\delta > 0$. Then, by using Young's inequality and product-chain rules of differential, the integrals $I_1$ and $I_2$, as in \eqref{ken.0708_01}, are estimated from the below, as follows.
  \begin{align}
    I_1 &= \sum_{i=1}^N \int_\Omega [\partial_i \gamma_\varepsilon](\nabla v) (\nabla \alpha \cdot \nabla \partial_i v) \,dx \geq -\sum_{i=1}^N \int_\Omega |\nabla \alpha \cdot \nabla \partial_i v|\,dx
    \\
    &\geq - \frac{\delta}{2}|\nabla^2 v|_{[H]^{N \times N}}^2 - \frac{N}{2\delta}|\alpha|_V^2, \label{ken.0708_04}
  \end{align}
  and
  \begin{align}
    I_2 &= \sum_{i=1}^N \int_\Omega \alpha \nabla \bigl( [\partial_i \gamma_\varepsilon](\nabla v) \bigr) \cdot \nabla \partial_i v \,dx
    \\
    &= \sum_{i=1}^N \sum_{j=1}^N \sum_{\ell=1}^N \int_\Omega \alpha \bigl([\partial_\ell \partial_i \gamma_\varepsilon](\nabla v) \,\partial_j \partial_\ell v \bigr) \partial_j \partial_i v \, dx \\
    &= \sum_{j=1}^N \int_\Omega \alpha \bigl(\nabla^2 \gamma_\varepsilon(\nabla v) \nabla \partial_j v \bigr)  \cdot \nabla \partial_j v \, dx \geq 0. \label{ken.0708_05}
  \end{align}
  \noeqref{ken.0708_05,ken.0708_04,ken.0708_02}

  Next, let us put:
  \begin{equation}
    |\Gamma| := \H^{N-1}(\Gamma), \mbox{ and } C_\Gamma := |\nabla^2 d_\Gamma|_{[L^\infty(\Gamma)]^{N \times N}},
  \end{equation}
  and apply Lemma \ref{Ehrling} in Appendix to the case when:
  \begin{equation}
    r = r(\delta) := \frac{\delta}{C_\Gamma |\alpha|_{L^\infty(\Omega)}}, \mbox{ with } C_r = C_{r(\delta)} \geq 0.
  \end{equation}
  Then, we can obtain the lower-estimate of $I_{\Gamma,1}$, as follows.
  \begin{align}
    I_{\Gamma,1} &= \sum_{i=1}^N \int_\Gamma \alpha \, [\partial_i \gamma_\varepsilon](\nabla v)\left( \nabla v \cdot \partial_i n_\Gamma  \right) \, d\Gamma 
    \\
    &\geq -\sum_{i=1}^N \int_\Gamma \left| \alpha \, [\partial_i \gamma_\varepsilon](\nabla v) \right| | \nabla v || \partial_i n_\Gamma | \,d\Gamma 
    \\
    &\geq -C_\Gamma |\alpha|_{L^\infty(\Omega)} \int_\Gamma | \nabla v | \, d\Gamma \geq - \frac{C_\Gamma |\alpha|_{L^\infty(\Omega)} }{2} (|\Gamma| + |\nabla v|_{[L^2(\Gamma)]^N}^2)
    \\
    &\geq -\frac{\delta}{2}|\nabla^2 v|_{[H]^{N \times N}}^2 - \frac{C_\Gamma |\alpha|_{L^\infty(\Omega)}}{2} \bigl( C_{r(\delta)} |v|_V^2 + |\Gamma| \bigr). \label{ken.0708_02}
  \end{align}

  As final estimate, we show:
  \begin{equation}
      \sum_{i=1}^N [\partial_i \gamma_\varepsilon](\nabla v) \partial_i (\nabla v \cdot n_\Gamma) = 0 \mbox{ a.e. on } \Gamma, ~\mbox{ and hence $ I_{\Gamma, 2} = 0 $.} 
      \label{ken.0708_03}
  \end{equation}
  For simplicity, we put $G := \nabla v \cdot n_\Gamma$. Then, invoking Remark \ref{rem:NS}, the above identify will be equivalently reformulated as follows.
  \begin{align}
      \sum_{i = 1}^N [\partial_i v](\Theta_\ell \Xi_\ell([z', 0 & ])) \, [\partial_i G](\Theta_\ell \Xi_\ell([z', 0]))  = [\nabla v \cdot \nabla G](\Theta_\ell \Xi_\ell ([z',0])) = 0,
    \\
    & \mbox{ for all } z' \in r_\ell \B^{N-1}, \ \mbox{for all } \ell = 1, \dots , M, \label{est132}
  \end{align}
  where $M,\, \{ r_\ell \}_{\ell = 1}^M,\, \{ \Theta_\ell \}_{\ell = 1}^M$, and  $\{ \Xi_\ell \}_{\ell = 1}^M$ are as in Remark \ref{rem:NS}. 

  Since $\Theta_\ell, \, \ell = 1, \dots, M$, are congruent transforms, we can calculate that:
  \begin{align}
    &\quad[\nabla v](\Theta_\ell \Xi_\ell([z', 0])) \cdot [\nabla G](\Theta_\ell \Xi_\ell ([z',0])) \\
    &= \left( (^{t}(D \Xi_\ell))^{-1} \bigl[\nabla (v \circ \Theta_\ell \circ \Xi_\ell)\bigr]([z', 0]) \right) \cdot \left( (^{t}(D \Xi_\ell))^{-1} \bigl[\nabla (G \circ \Theta_\ell \circ \Xi_\ell)\bigr]([z',0]) \right) 
    \\
    &= ^{t}\hspace{-0.7ex}\bigl( \bigl[\nabla (v \circ \Theta_\ell \circ \Xi_\ell)\bigr]([z', 0]) \bigr) \bigl[(D \Xi_\ell)^{-1} (^{t}(D \Xi_\ell))^{-1} \bigr] \bigl[\nabla (G \circ \Theta_\ell \circ \Xi_\ell)\bigr]([z',0]), \label{eq:BI2}
    \\
    &\qquad\qquad\qquad\qquad\quad\  \mbox{ for all } z' \in r_\ell \B^{N-1}, \mbox{ and } \ell = 1, \dots , M.
  \end{align}
  Besides, in the light of the boundary condition $G = \nabla v \cdot n_\Gamma = 0 \mbox{ a.e. on } \Gamma$,
  \begin{align}
    &\left\{ \begin{aligned}
      &\bigl[\partial_i \bigl( G \circ \Theta_\ell \circ \Xi_\ell  \bigr)\bigr]([z',0]) = 0, \ i = 1,\dots, N-1,
      \\
      &\bigl[\partial_N \bigl( v \circ \Theta_\ell \circ \Xi_\ell  \bigr)\bigr]([z',0]) = 0,
    \end{aligned} \right. \label{eq:BI3}
    \\
    &\qquad \qquad \mbox{ for all } z' \in r_\ell \B^{N-1}, \ \ell = 1,\dots,M.
  \end{align}
  Moreover, by virtue of \eqref{D_Xi} in Remark \ref{rem:NS},
  \begin{itemize}
    \item[$\sharp$2)] the $n$-th column of $\ds{\bigl[(D \Xi_\ell)^{-1} (^{t}(D \Xi_\ell))^{-1} \bigr]}$ at $[z',0]$ coincides with ${\bm e}_N = ^t [0,\dots,0,1]$.
  \end{itemize}
  From \eqref{eq:BI2}, \eqref{eq:BI3}, and $\sharp$2), it is inferred that:
  \begin{align}
      {}^{t}\bigl( \bigl[\nabla (v \circ \Theta_\ell \circ \Xi_\ell)\bigr]([z', 0]) & \bigr) \bigl[(D \Xi_\ell)^{-1} (^{t}(D \Xi_\ell))^{-1} \bigr] \bigl[\nabla (G \circ \Theta_\ell \circ \Xi_\ell)\bigr]([z',0]) \label{eq:BI5}
    \\
    &  = 0, ~ \mbox{ for all } z' \in r_\ell \B^{N-1},
  \end{align}
  \eqref{eq:BI2} and \eqref{eq:BI5} are equivalent to \eqref{est132}, and it is sufficient to verify \eqref{ken.0708_03} and $I_{\Gamma,2} = 0$.

  Now, taking into account \eqref{ken.0708_01}--\eqref{ken.0708_03}, we will conclude that the constant
  \begin{equation}
      C_1(\delta) := \frac{1}{2 (\delta \wedge 1)} \bigl( C_\Gamma |\alpha|_{L^\infty(\Omega)} + 1\bigr)\bigl( C_{r(\delta)} + |\Gamma| + N|\alpha|_V^2 \bigr)
  \end{equation}
  will be the required constant. 
  
  Thus, we finish the proof of this lemma.
  
\end{proof}

\begin{proof}[The proof of Theorem \ref{(E)sol.}]
  First, we note that the problem (E) is equivalent to the minimization problem for the following proper l.s.c. convex function $\Upsilon:\, H \longrightarrow (-\infty, \infty]$, defined as:
\begin{equation}
  \Upsilon:\,z \in H \mapsto \Upsilon(z) := \left\{ \begin{aligned}
    &\int_{\Omega} \left(\alpha \gamma_\varepsilon(\nabla z) + \frac{\beta}{2} |\nabla z|^2 \right) \,dx \\
    &\quad + \frac{1}{2}\int_\Omega \alpha^\circ |z|^2\,dx - \int_\Omega f^\circ z \,dx, \quad {\rm if}~ z \in V,\\
    &+\infty, \quad {\rm otherwise}.
  \end{aligned} \right. \label{defOfPhi}
\end{equation}
In the light of \eqref{delta0} and \eqref{defOfPhi}, $\Upsilon$ is coercive and strictly convex on $H$. So, $\Upsilon$ admits a unique minimizer, and it directly leads to the existence and uniqueness of solution $u \in V$ to the problem (E).

Hence, our remaining task is to show $u \in W_0$. To this end, we put $\varphi = u_\kappa$ in \eqref{eq:sol.of.Ekappa}, and derive that:
\begin{equation}
  \frac{\delta_0}{2} |u_\kappa|_{V}^2 \leq \frac{1}{2 \delta_0} |f^\circ|_{H}^2, \mbox{ for any } \kappa > 0, \label{est_V}
\end{equation}
via the following computations:
\begin{gather}
  (\alpha \nabla \gamma_\varepsilon(\nabla u_\kappa), \nabla u_\kappa)_H = \int_\Omega \alpha \frac{|\nabla u_\kappa|^2}{\sqrt{\varepsilon^2 + |\nabla u_\kappa|^2}} \,dx \geq 0, \\
  (\beta \nabla u_\kappa, \nabla u_\kappa)_H \geq \delta_0 |\nabla u_\kappa |^2_{[H]^N}, \\
  (\alpha^\circ u_\kappa, u_\kappa)_H \geq \delta_0 |u_\kappa|_H^2,
\end{gather}
and
\begin{equation}
  (f^\circ ,u_\kappa)_H \leq \frac{\delta_0}{2}| u_\kappa|_{H}^2 + \frac{1}{2\delta_0} |f^\circ|_H^2, \mbox{ for any } \kappa > 0.
\end{equation}
Besides, by putting $\varphi = A_N u_\kappa$ in \eqref{eq:sol.of.Ekappa}, it follows from \eqref{delta0} and $A_N u_\kappa = - \Lap u_\kappa \in W_0$ that
\begin{gather}
  \kappa |\nabla A_N u_\kappa|_{[H]^N}^2 + \delta_0 |A_N u_\kappa|_H^2\leq \bigl( \alpha \nabla \gamma_\varepsilon(\nabla u_\kappa), \nabla \Lap u_\kappa \bigr)_{[H]^N} 
  \\
  + ( \nabla \beta \cdot \nabla u_{\kappa} -\alpha^\circ u_\kappa + f^\circ, A_N u_\kappa )_H, \mbox{ for any } \kappa > 0.
\end{gather}
So, by using Young's inequality, the above inequality is reduced to:
\begin{gather}
    \frac{\delta_0}{2} |A_N u_\kappa|_H^2 \leq \bigl( \alpha \nabla \gamma_\varepsilon (\nabla u_\kappa), \nabla \Lap u_\kappa \bigr)_{[H]^N}
    \\
    +\frac{2}{\delta_0} \bigl( (|\alpha^\circ|_{L^\infty(\Omega)}^2 +|\nabla \beta|_{[L^\infty(\Omega)]^N}^2) |u_\kappa|_V^2 + |f^\circ|_H^2 \bigr), \mbox{ for any } \kappa > 0.
\label{est_H2}
\end{gather}
Here, let us take $\delta > 0$, arbitrary. Then, by Lemma \ref{est99}, there exists a constant $C_1 = C_1(\delta)$, depending on $\delta$ and $\alpha$, such that $C_1 = C_1(\delta)$ is independent of $\kappa > 0$, and
\begin{align}
  &\bigl( \alpha \nabla \gamma_\varepsilon(\nabla u_\kappa), \nabla \Lap u_\kappa \bigr)_{[H]^N} = - \bigl( \diver(\alpha \nabla \gamma_\varepsilon(\nabla u_\kappa)), \Lap u_\kappa \bigr)
  \\
  &\quad \leq \delta|\nabla^2 u_\kappa|_{[H]^{N \times N}}^2 + C_1(\delta) (|u_\kappa|_V^2 + 1), \mbox{ for any } \kappa > 0. \label{est_H20}
\end{align}
    Additionally, by using \eqref{embb01} and \eqref{est_V}, the estimate \eqref{est_H20} can be continued as follows:
\begin{align}
  &\bigl( \alpha \nabla \gamma_\varepsilon(\nabla u_\kappa), \nabla \Lap u_\kappa \bigr)_{[H]^N}
  \\
  &\quad \leq C_0\delta \bigl( |A_N u_\kappa|_H^2 + |u_\kappa|_H^2 \bigr) + C_1(\delta) \left( \frac{1}{\delta_0^2}|f^\circ|_H^2 + 1 \right), \mbox{ for any } \kappa > 0. \label{est_H21}
\end{align}
Based on these, we set:
\begin{equation}
  \delta = \delta_1 := \frac{\delta_0}{4C_0}. \label{est_H22}
\end{equation}
Then, from \eqref{est_H2}, \eqref{est_H21}, and \eqref{est_H22}, it is deduced that:
    \begin{gather}
        \frac{\delta_0}{4}|A_N u_\kappa|_H^2 \leq \left( \frac{\delta_0}{4} + \frac{2}{\delta_0} (|\alpha^\circ|_{L^\infty(\Omega)}^2 + |\nabla \beta|_{[L^\infty(\Omega)]^N}^2) \right) |u_\kappa|_H^2 
        \\
        + 2\bigl( C_1(\delta_1) + 1 \bigr)\left( \frac{1}{\delta_0^2 \wedge 1}|f^\circ|_H^2 + 1 \right), \mbox{ for any }\kappa > 0. \label{est_H23}
\end{gather}
Taking a sum of \eqref{est_V} and \eqref{est_H23}, and using \eqref{embb01}, we will find a constant $C_2$, independent of $\kappa > 0$, such that
\begin{gather}
  |u_\kappa|_{H^2(\Omega)}^2 \leq C_2 \bigl( |f^\circ|_H^2 + 1 \bigr),\mbox{ for any } \kappa > 0, \label{est_H24}
    \\
    \mbox{where} ~~
    C_2 := \frac{8C_0}{(\delta_0^4 \wedge 1)} \left( C_1(\delta_1) + |\alpha^\circ |_{L^\infty(\Omega)}^2 + |\nabla \beta|_{[L^\infty(\Omega)]^N}^2 + 2 \right).
\end{gather}

On account of \eqref{est_H24}, there exist a subsequence of $\{ u_\kappa \}_{\kappa > 0} \subset W_0$ (not relabeled), and the function $\tilde u \in W_0$, such that:
\begin{equation}
  u_{\kappa} \to \tilde u \mbox{ in } V, \mbox{ and weakly in } H^2(\Omega), \mbox{ as } \kappa \downarrow 0.
\end{equation}
Therefore, letting $\kappa \downarrow 0$ in \eqref{eq:sol.of.Ekappa}, we can observe that the limit $\tilde u \in W_0$ is a solution to (E). Due to the uniqueness of solution, any solution $u$ to (E) should coincide with $\tilde u \in W_0$.
\medskip

Thus, we conclude Theorem 1.
\end{proof}

\section{Proofs of Main Theorems}
This section is devoted to the proofs of Main Theorems 1 and 2. 

First, referring to Theorem \ref{(E)sol.}, the approximating problem (AP)$^\tau_\varepsilon$ admits a unique solution, i.e., for any $ \varepsilon > 0 $, there exists a sequence of functions $\{ u_{\varepsilon,i} \}_{i = 1}^\infty \subset W_0$, such that:
\begin{gather}
  \frac{1}{\tau}(u_{\varepsilon,i} - u_{\varepsilon,i-1}, \varphi)_H + \left( \alpha \nabla \gamma_\varepsilon(\nabla u_{\varepsilon,i}),\nabla \varphi \right)_{[H]^N} + \frac{1}{\tau} \left(\beta\nabla (u_{\varepsilon,i} - u_{\varepsilon,i-1}), \nabla \varphi \right)_{[H]^N}
  \\
  = (f_i, \varphi)_H,\ \mbox{ for any } \varphi \in V ,\ i = 1,2,\ldots, \mbox{ with } u_{\varepsilon,0} = u_0 \mbox{ in } H.
 \label{AP.sol} 
\end{gather}
\noeqref{tI01,tI02}
Here, we set $n_\tau := \min \{ n \in \N \,|\, n\tau \geq T \}$, $ T_\tau := \tau \cdot n_\tau $, and $\Delta_{i,\tau} := [t_{i-1}, t_i)$. Besides, in the light of \eqref{tI}, we let $\tau_0 \in (0, \frac{1}{2})$ be sufficiently small such that:
\begin{equation}\label{fest}
    \tau \sum_{i = 1}^{n_\tau} |f_i|_H^2 = \bigl| [\overline{f}]_\tau \bigr|_{L^2(0, T_\tau;H)}^2 \leq |f|_{L^2(0,T;H)}^2 + 1, \mbox{ for all } \tau \in (0,\tau_0).
\end{equation}

Now, we will see estimates of the approximating solution $\{ u_{\varepsilon,i} \}_{i = 1}^{n_\tau} \subset W_0 $.\begin{lem}\label{Lem1}
    Let $\tau \in (0,\tau_0)$ and $\varepsilon > 0$. Then, it holds that:
  \begin{gather}
    |u_{\varepsilon,i}|_V^2 \leq C_3 (|u_0|_V^2 + |f|_{L^2(0,T;H)}^2 + 1), \ i = 1,2,3, \dots, n_\tau, \label{eq:Lem1.1}
    \\
    \frac{1}{\tau} \sum_{i=1}^{n_\tau} \bigl( |u_{\varepsilon,i} - u_{\varepsilon,i-1}|_H^2 + \delta_* | \nabla (u_{\varepsilon,i} - u_{\varepsilon,i-1})|_{[H]^N}^2 \bigr) \leq C_4 (|u_0|_V^2 + |f|_{L^2(0,T;H)}^2 + \varepsilon^2 + 1) \label{eq:Lem1.2},
  \end{gather}
  where
  \begin{align}
      C_3 := \frac{2(|\beta|_{L^\infty(\Omega)} + 1)}{\delta_* \wedge 1} e^{2T}, \mbox{ and } C_4 := |\alpha|_H^2 + \L^N(\Omega) + 1.
  \end{align}
  \begin{proof}
    By multiplying the both sides of (AP)$^\tau_\varepsilon$ with $u_{\varepsilon,i}$, we obtain the following estimates:
    \begin{align}
    \frac{1}{2\tau} \left( |u_{\varepsilon,i}|^2_H - |u_{\varepsilon,i-1}|_H^2 \right) + \frac{1}{2\tau} \int_\Omega \beta\left( |\nabla u_{\varepsilon, i} |^2 - |\nabla u_{\varepsilon,i-1}|^2 \right)\,dx \label{gronwall1}
    \\
    \leq \frac{1}{2}|u_{\varepsilon,i}|_H^2 + \frac{1}{2} |f_i|_H^2, \ \mbox{for every }i = 1,2,\ldots, n_\tau,
  \end{align}
  via following calculation:
  \begin{gather}
    \frac{1}{\tau} \left( u_{\varepsilon,i}- u_{\varepsilon,i-1}, u_{\varepsilon,i} \right)_H \geq \frac{1}{2\tau} \left( |u_{\varepsilon,i}|^2_H - |u_{\varepsilon,i-1}|_H^2 \right),
    \\
    \left( \alpha \nabla \gamma_\varepsilon(\nabla u_{\varepsilon,i}), \nabla u_{\varepsilon,i} \right)_{[H]^N} = \int_\Omega \alpha \frac{|\nabla u_{\varepsilon,i}|^2}{\sqrt{\varepsilon^2 + |\nabla u_{\varepsilon,i}|^2}} dx \geq 0, \label{eq:H1-bdd1}
    \\
    \frac{1}{\tau} \left( \beta \nabla (u_{\varepsilon,i}- u_{\varepsilon,i-1}), \nabla u_{\varepsilon,i} \right)_{[H]^N} \geq \frac{1}{2\tau} \int_\Omega \beta\left( |\nabla u_{\varepsilon, i} |^2 - |\nabla u_{\varepsilon,i-1}|^2 \right)\,dx,
    \\
    (f_i, u_{\varepsilon,i})_H \leq \frac{1}{2}|f_i|_H^2 + \frac{1}{2} |u_{\varepsilon,i}|_H^2, \mbox{ for } i = 1,2,3,\dots, n_\tau.
  \end{gather}
  Hence, we obtain that
\begin{equation}
  \frac{1}{\tau} (X_i - X_{i-1}) \leq (X_i + |f_i|_H^2),\label{gronwall.Lem}
\end{equation}
with
\begin{equation}
  X_i := |u_{\varepsilon,i}|_H^2 + |\sqrt{\beta} \nabla u_{\varepsilon,i}|_{[H]^N}^2, \ \mbox{for } i = 1,2,3,\dots, n_\tau.
\end{equation}
      Here, with \eqref{fest} and $ 0 < \tau < \tau_0 < \frac{1}{2} $ in mind, we can apply the discrete version of Gronwall's lemma (cf. \cite[Section 3.1]{emmrich1999discrete}) to \eqref{gronwall.Lem}, and derive that:
\begin{equation}
  X_i \leq e^{2T} \bigl( X_0 + |f|_\sH^2 + 1 \bigr), \ \mbox{for } i = 1,2,3,\dots, n_\tau.
\end{equation}
This implies that:
\begin{equation}
  |u_{\varepsilon,i}|_V^2 \leq \frac{(|\beta|_{L^\infty(\Omega)} + 1)}{\delta_* \wedge 1}e^{2T} \bigl( |u_0|_V^2 + |f|_\sH^2 + 1 \bigr), \ \mbox{for } i = 1,2,3, \dots, n_\tau. \label{073101}
\end{equation}
In addition, by multiplying the both sides with $u_{\varepsilon,i} - u_{\varepsilon,i-1}$ and using the convexity of $\gamma_\varepsilon$, it is observed that:
\begin{gather}
  \frac{1}{2\tau}|u_{\varepsilon,i} - u_{\varepsilon,i-1}|_H^2 + \frac{\delta_*}{\tau}|\nabla (u_{\varepsilon,i} - u_{\varepsilon,i-1})|_{[H]^N}^2 + \int_\Omega \alpha \gamma_\varepsilon(\nabla u_{\varepsilon,i}) \,dx \label{gronwall2}
  \\
  \leq \int_\Omega \alpha \gamma_\varepsilon (\nabla u_{\varepsilon,i-1}) \,dx + \frac{\tau}{2}|f_i|_H^2, \quad \mbox{for }i = 1,2,3,\dots, n_\tau,
\end{gather}
via following computation:
\begin{gather}
  \frac{1}{\tau} (u_{\varepsilon,i} - u_{\varepsilon,i-1}, u_{\varepsilon,i} - u_{\varepsilon,i-1})_H = \frac{1}{\tau}|u_{\varepsilon,i} - u_{\varepsilon,i-1}|_H^2,\
  \\
  \left( \alpha \nabla\gamma_\varepsilon(\nabla u_{\varepsilon,i}), \nabla u_{\varepsilon,i} - \nabla u_{\varepsilon,i-1} \right)_{[H]^N} \geq \int_\Omega \alpha \gamma_\varepsilon(\nabla u_{\varepsilon,i}) dx - \int_\Omega \alpha \gamma_\varepsilon(\nabla u_{\varepsilon,i-1}) dx,
  \\
  \frac{1}{\tau}\left( \beta\nabla(u_{\varepsilon,i} - u_{\varepsilon,i-1}) , \nabla (u_{\varepsilon,i} - u_{\varepsilon,i-1})\right)_{[H]^N} \geq \frac{\delta_*}{\tau}\left| \nabla u_{\varepsilon,i} - \nabla u_{\varepsilon,i-1} \right|_{[H]^N}^2,
  \\
    (f_i, u_{\varepsilon,i} - u_{\varepsilon,i-1})_H \leq \frac{1}{2\tau} |u_{\varepsilon,i} - u_{\varepsilon,i-1}|_H^2 + \frac{\tau}{2}|f_i|_H^2, \mbox{ for $ i = 1,2,3, \dots, n_\tau $.}
\end{gather}
      Taking a sum of the inequalities in \eqref{gronwall2}, and using \eqref{fest}, we obtain that:
\begin{align}
  &\frac{1}{\tau}\sum_{i=1}^{n_\tau} \bigl(|u_{\varepsilon,i} - u_{\varepsilon,i-1}|_H^2 + \delta_*|\nabla (u_{\varepsilon,i} - u_{\varepsilon,i-1})|_{[H]^N}^2\bigr) \label{073102}
  \\
  &\qquad \leq 2\int_\Omega \alpha \gamma_\varepsilon(\nabla u_0) \,dx + |f|_\sH^2 + 1 
  \\
  &\qquad \leq |\alpha|_H^2 + \varepsilon^2 \L^N(\Omega) + |u_0|_V^2 + |f|_\sH^2 + 1.
  \\
  &\qquad \leq \bigl( |\alpha|_H^2 + \L^N(\Omega) + 1 \bigr)\bigl( |u_0|_V^2 + |f|_{\sH}^2 + \varepsilon^2 + 1 \bigr).
\end{align}
\eqref{073101} and \eqref{073102} finish the proof of this lemma.
  \end{proof}
\end{lem}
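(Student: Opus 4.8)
The plan is to derive both a priori bounds directly from the discrete weak formulation \eqref{AP.sol}, by testing it with two carefully chosen functions and closing each estimate with an elementary monotonicity/convexity argument (supplemented, for the first bound, by a discrete Gronwall inequality). The structural principle I would keep in mind is that assumption (A2) only provides $\alpha\ge 0$, so the singular total-variation term $(\alpha\nabla\gamma_\varepsilon(\nabla u_{\varepsilon,i}),\nabla\varphi)$ can contribute \emph{no} coercivity; all of the $V$-control must therefore come from the velocity-of-flux term, whose coefficient $\beta$ is bounded below by $\delta_*>0$. This observation governs both computations.

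For the first estimate \eqref{eq:Lem1.1}, I would set $\varphi=u_{\varepsilon,i}$ in \eqref{AP.sol}. The discrete time-derivative is handled by the identity $2(a-b,a)_H=|a|_H^2-|b|_H^2+|a-b|_H^2$, yielding the lower bound $\tfrac{1}{2\tau}(|u_{\varepsilon,i}|_H^2-|u_{\varepsilon,i-1}|_H^2)$; the same identity applied pointwise with the nonnegative weight $\beta$ treats the velocity term, producing $\tfrac{1}{2\tau}\int_\Omega\beta(|\nabla u_{\varepsilon,i}|^2-|\nabla u_{\varepsilon,i-1}|^2)\,dx$; the singular term is simply nonnegative since $\alpha\ge0$ and $\nabla\gamma_\varepsilon(y)\cdot y=|y|^2/\sqrt{\varepsilon^2+|y|^2}\ge0$; and the forcing is split by Young's inequality. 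Setting $X_i:=|u_{\varepsilon,i}|_H^2+|\sqrt{\beta}\nabla u_{\varepsilon,i}|_{[H]^N}^2$ gives the discrete differential inequality $\tfrac{1}{\tau}(X_i-X_{i-1})\le X_i+|f_i|_H^2$. Because $\tau<\tau_0<\tfrac12$, this rearranges to $X_i\le(1-\tau)^{-1}(X_{i-1}+\tau|f_i|_H^2)$, and iterating with $(1-\tau)^{-1}\le e^{2\tau}$ together with \eqref{fest} yields $X_i\le e^{2T}(X_0+|f|_\sH^2+1)$. Estimate \eqref{eq:Lem1.1} then follows from the coercivity $X_i\ge(\delta_*\wedge1)|u_{\varepsilon,i}|_V^2$ and the initial bound $X_0\le(|\beta|_{L^\infty(\Omega)}+1)|u_0|_V^2$.

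For the second estimate \eqref{eq:Lem1.2}, I would instead test \eqref{AP.sol} with the time increment $\varphi=u_{\varepsilon,i}-u_{\varepsilon,i-1}$. The time-derivative then produces exactly $\tfrac{1}{\tau}|u_{\varepsilon,i}-u_{\varepsilon,i-1}|_H^2$; the convexity of $\gamma_\varepsilon$, via the subdifferential inequality, converts the singular term into the telescoping difference $\int_\Omega\alpha(\gamma_\varepsilon(\nabla u_{\varepsilon,i})-\gamma_\varepsilon(\nabla u_{\varepsilon,i-1}))\,dx$; the $\beta$-term supplies the coercive contribution $\tfrac{\delta_*}{\tau}|\nabla(u_{\varepsilon,i}-u_{\varepsilon,i-1})|_{[H]^N}^2$ (using $\beta\ge\delta_*$); and the forcing is absorbed by Young's inequality carrying the factor $\tau$. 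Summing over $i=1,\dots,n_\tau$ makes the $\gamma_\varepsilon$-energies telescope; discarding the nonnegative terminal energy and bounding the initial one through the Young inequality $2\alpha\,|[\varepsilon,\nabla u_0]|_{\R^{N+1}}\le\alpha^2+\varepsilon^2+|\nabla u_0|^2$ gives $2\int_\Omega\alpha\gamma_\varepsilon(\nabla u_0)\,dx\le|\alpha|_H^2+\varepsilon^2\L^N(\Omega)+|u_0|_V^2$, which with \eqref{fest} delivers \eqref{eq:Lem1.2}.

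The individual computations are routine; the only genuinely delicate points are the correct bookkeeping in the discrete Gronwall step (the reason the smallness $\tau<\tfrac12$ is imposed) and, more importantly, ensuring that every constant is uniform in both $\tau$ and $\varepsilon$. Uniformity is automatic for \eqref{eq:Lem1.1}, while in \eqref{eq:Lem1.2} the sole $\varepsilon$-dependence is the harmless $\varepsilon^2\L^N(\Omega)$ term, which remains bounded as $\varepsilon\downarrow0$. This $\varepsilon$-uniformity is exactly what will later allow the passage to the singular limit needed for (P)$_0$, and so I would treat it as the main conceptual, rather than technical, obstacle.
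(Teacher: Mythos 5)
Your proposal is correct and follows essentially the same route as the paper: testing the discrete scheme with $u_{\varepsilon,i}$ plus a discrete Gronwall argument for \eqref{eq:Lem1.1}, and testing with the increment $u_{\varepsilon,i}-u_{\varepsilon,i-1}$ together with the convexity (subdifferential) inequality for $\gamma_\varepsilon$, telescoping, and the Young-type bound $2\alpha\gamma_\varepsilon(\nabla u_0)\leq \alpha^2+\varepsilon^2+|\nabla u_0|^2$ for \eqref{eq:Lem1.2}. All the key points, including the uniformity of the constants in $\tau$ and $\varepsilon$, match the paper's proof.
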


\begin{lem} \label{Lem2}
  There exists a constant $C_5 > 0$, independent of $\varepsilon $ and $ i$, such that
  \begin{equation}
    |u_{\varepsilon,i}|_{H^2(\Omega)}^2 \leq C_5 \bigl( |u_0|_{H^2(\Omega)}^2 + |f|_{\sH}^2 + \varepsilon^2 +  1 \bigr).
  \end{equation}
  \begin{proof}
    Let us consider to multiply the both sides of (AP)$^\tau_\varepsilon$ by $A_N u_{\varepsilon,i}\ ( = -\Lap u_{\varepsilon,i})$. 
    
    First, by Young's inequality, we easily have:
      \begin{gather}
        \frac{1}{\tau}(u_{\varepsilon,i} - u_{\varepsilon,i-1}, A_N u_{\varepsilon,i})_H \geq \frac{1}{2\tau} \bigl( |\nabla u_{\varepsilon,i}|_{[H]^N}^2 - |\nabla u_{\varepsilon,i-1}|_{[H]^N}^2 \bigr), \label{gronwall3}
      \end{gather}
      and
      \begin{gather}
          (f_i, A_N u_{\varepsilon,i})_H \leq \frac{1}{2}|f_i|_H^2 + \frac{1}{2}|A_N u_{\varepsilon,i}|_H^2 \leq \frac{1}{2}|f_i|_H^2 + \frac{1}{2\delta_*}\int_\Omega \beta |\Lap u_{\varepsilon,i}|^2 \,dx, 
          \\
          \mbox{for $ i = 1,2 , 3, \dots, n_\tau $.}
      \end{gather}
Secondly, we take the embedding constant $C_0$ as in \eqref{embb01}, and apply Lemma \ref{est99} to the case when
\begin{equation}
    \delta = \frac{1}{2C_0}, \mbox{ with the corresponding constant } \tilde C_1 := C_1(\delta) \, \left( = {\textstyle C_1 \bigl(\frac{1}{2C_0} \bigr)} \right).
\end{equation}
      Then, from \eqref{fest} and (A2), it is deduced that:
\begin{align}
  &\bigl( -\diver(\alpha \nabla \gamma_\varepsilon( \nabla u_{\varepsilon,i} )), A_N u_{\varepsilon,i} \bigr)_H = \bigl( \diver(\alpha \nabla \gamma_\varepsilon( \nabla u_{\varepsilon,i} )), \Lap u_{\varepsilon,i} \bigr)_H \label{gronwall4}
  \\
  &\qquad\geq - \frac{1}{2C_0}| \nabla^2 u_{\varepsilon,i} |_{[H]^{N \times N}}^2 - \tilde C_1(|u_{\varepsilon,i}|_V^2 + 1)
  \\
  &\qquad\geq - \frac{1}{2}(|A_N u_{\varepsilon,i}|_H^2 + |u_{\varepsilon,i}|_H^2) - \tilde C_1(|u_{\varepsilon,i}|_V^2 + 1)
  \\
  &\qquad\geq -\frac{1}{2\delta_*} \int_\Omega \beta|\Lap u_{\varepsilon,i}|^2 \,dx - C_3\bigl( \tilde C_1 + 1 \bigr)(|u_0|_V^2 + |f|_{L^2(0,T;H)}^2 + 1),
    \\
    & \qquad\qquad\mbox{for $ i = 1, 2, 3, \dots, n_\tau $.}
\end{align}
Finally, by using Young's inequality, one can compute that:
\begin{align}
  &\frac{1}{\tau} \bigl( -\diver(\beta \nabla (u_{\varepsilon,i} - u_{\varepsilon,i-1})), A_N u_{\varepsilon,i} \bigr)_H \label{gronwall5}
  \\
  &\quad \geq \frac{1}{2\tau} \int_\Omega \beta \bigl( |\Lap u_{\varepsilon,i}|^2 - |\Lap u_{\varepsilon,i-1}|^2 \bigr)\,dx + \frac{1}{\tau} \bigl( \nabla \beta \cdot \nabla (u_{\varepsilon,i} - u_{\varepsilon,i-1}), \Lap u_{\varepsilon,i} \bigr)_H,
\end{align}
and furthermore,
\begin{align}
  &\frac{1}{\tau}(\nabla \beta \cdot \nabla(u_{\varepsilon, i} - u_{\varepsilon, i-1}), \Lap u_{\varepsilon,i} )_H \label{gronwall6}
  \\
  &\quad\geq - \frac{1}{\tau}\int_\Omega |\nabla \beta| \bigl| \nabla (u_{\varepsilon,i} - u_{\varepsilon,i-1}) \bigr| |\Lap  u_{\varepsilon,i}| \,dx 
  \\
  &\quad\geq - \frac{1}{2\delta_*}|\nabla \beta|_{[L^\infty(\Omega)]^N}^2 |\Lap  u_{\varepsilon,i}|_H^2 -\frac{\delta_*}{2\tau^2}|\nabla(u_{\varepsilon, i} - u_{\varepsilon,i-1})|_{[H]^N}^2
  \\
  &\quad\geq - \frac{1}{2\delta_*^2}|\nabla \beta|_{[L^\infty(\Omega)]^N}^2 \int_\Omega \beta |\Lap u_{\varepsilon,i}|^2\,dx 
  \\
  &\quad\qquad + \frac{1}{2\tau} \left( \int_\Omega \alpha \gamma_\varepsilon(\nabla u_{\varepsilon, i}) \, dx - \int_\Omega \alpha \gamma_\varepsilon(\nabla u_{\varepsilon,i-1})\, dx \right) - \frac{1}{4}|f_i|_H^2,
    \\
    &\mbox{for $ i = 1, 2, 3, \dots, n_\tau $.}
\end{align}\noeqref{gronwall4, gronwall5} 

On account of \eqref{gronwall3}--\eqref{gronwall6}, we will infer that:
\begin{equation}
    \frac{1}{\tau} (Y_i - Y_{i-1}) \leq \tilde C_* (Y_i + |f_i|_H^2 + 1), \label{gronwall10}
  \end{equation} 
where
\begin{equation}
    Y_i := |\nabla u_{\varepsilon,i}|_{[H]^N}^2 + \int_\Omega \beta |\Lap  u_{\varepsilon,i}|^2 \,dx + \int_\Omega \alpha \gamma_\varepsilon (\nabla u_{\varepsilon,i} ) \,dx, \mbox{ for $ i = 1,2,3 \ldots, n_\tau $,}
\end{equation}
and
\begin{equation}
  \tilde C_* := \frac{2C_3}{\delta_*^2 \wedge 1}\bigl(|\nabla \beta|_{[L^\infty(\Omega)]^N}^2 + 1 \bigr) \bigl( \tilde C_1 + 1 \bigr) \bigl( |u_0|_V^2 + |f|_{\sH}^2 + 1 \bigr) \geq 2.
\end{equation}
Here, let us take $\tau_*$ to satisfy:
\begin{equation}
  \tau_* < \min \left\{ \tau_0, \frac{1}{2\tilde C_*} \right\}, ~{\rm and~in~particular,}~ 1-\tau_* \tilde C_* > \frac{1}{2}.
\end{equation}
Then, applying the discrete version of Gronwall's lemma (cf. \cite[Section 3.1]{emmrich1999discrete}) to \eqref{gronwall10}, it is observed that:
\begin{align}
    Y_i \leq \tilde C_* e^{2\tilde C_* T} (Y_0 + |f|_\sH^2 + T + 1), ~~ \mbox{for $ i = 1, 2, 3,\ldots, n_\tau $.}
\end{align}
Having in mind \eqref{embb01}, \eqref{fest}, and Lemma \ref{Lem1}, we arrive at:
\begin{align}
  &\quad |u_{\varepsilon,i}|_{H^2(\Omega)}^2 \leq C_0 \bigl( | A_N u_{\varepsilon,i} |_H^2 + |u_{\varepsilon,i}|_H^2  \bigr) = C_0(|\Lap u_{\varepsilon,i}|_H^2 + |u_{\varepsilon,i}|_H^2 ) 
  \\
  &\leq \frac{C_0}{\delta_* \wedge 1} Y_i + C_0 C_3(|u_0|_V^2 + |f|_{\sH}^2 + 1)
  \\
  &\leq \frac{C_0 \tilde C_* e^{2 \tilde C_* T}}{\delta_* \wedge 1} (Y_0 + |f|_\sH^2 + T + 1) + C_0 C_3(|u_0|_V^2 + |f|_{\sH}^2 + 1)
  \\
  &\leq \frac{C_0 \tilde C_* e^{2 \tilde C_* T}}{\delta_* \wedge 1} \left( (2 + N|\beta|_{L^\infty(\Omega)}) |u_0|_{H^2(\Omega)}^2 + |\alpha|_H^2 + \varepsilon^2 \L^N(\Omega) + |f|_{\sH}^2 + T + 1\right)
  \\
  &\quad\qquad + C_0 C_3 \bigl( |u_0|_V^2 + |f|_{\sH}^2 + 1 \bigr)
  \\
    &\leq C_5 \bigl( |u_0|_{H^2(\Omega)}^2 + |f|_{\sH}^2 + \varepsilon^2 + 1 \bigr), ~ \mbox{for $ i = 1, 2, 3, \dots, n_\tau $,}
\end{align}
where
\begin{equation}
  C_5 := \frac{2 C_0 C_3 \tilde C_* e^{2\tilde C_* T}}{\delta_* \wedge 1} \left(N|\beta|_{L^\infty(\Omega)} + |\alpha|_H^2 + \L^N(\Omega) + T  + 2 \right).
\end{equation}

Thus we conclude this lemma.
\end{proof}
\end{lem}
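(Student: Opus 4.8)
The plan is to obtain the $H^2$-bound by testing the discrete scheme \eqref{AP.sol} against $A_N u_{\varepsilon,i} = -\Lap u_{\varepsilon,i}$, and then to close the resulting relation with the discrete Gronwall lemma. The testing is legitimate: since Theorem \ref{(E)sol.} yields $u_{\varepsilon,i} \in W_0$, the weak formulation \eqref{AP.sol} is equivalent (the Neumann boundary terms vanishing by the condition $\nabla u_{\varepsilon,i} \cdot n_\Gamma = 0$) to the strong identity in $H$, and every flux involved lies in $H$, so the $H$-pairing with $A_N u_{\varepsilon,i} \in H$ makes sense. The target is a Gronwall inequality for a carefully chosen functional $Y_i$, after which the embedding \eqref{embb01} converts the $A_N$-estimate into the $H^2$-estimate.

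I would treat the four resulting terms as follows. The time-difference term $\frac{1}{\tau}(u_{\varepsilon,i}-u_{\varepsilon,i-1}, A_N u_{\varepsilon,i})_H$ produces, by integration by parts and Young's inequality, the telescoping increment $\frac{1}{2\tau}\bigl(|\nabla u_{\varepsilon,i}|_{[H]^N}^2 - |\nabla u_{\varepsilon,i-1}|_{[H]^N}^2\bigr)$. The forcing term is bounded above by $\frac{1}{2}|f_i|_H^2 + \frac{1}{2\delta_*}\int_\Omega \beta|\Lap u_{\varepsilon,i}|^2\,dx$, the factor $\frac{1}{\delta_*}$ arranging for the Laplacian to appear weighted by $\beta$. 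The singular diffusion term $(-\diver(\alpha\nabla\gamma_\varepsilon(\nabla u_{\varepsilon,i})), A_N u_{\varepsilon,i})_H$ is exactly the quantity estimated from below in Lemma \ref{est99}; I would invoke that lemma with the specific choice $\delta = \frac{1}{2C_0}$, so that the resulting $\delta|\nabla^2 u_{\varepsilon,i}|_{[H]^{N\times N}}^2$ is absorbed into $\frac{1}{2}|A_N u_{\varepsilon,i}|_H^2$ through \eqref{embb01}, while the leftover $|u_{\varepsilon,i}|_V^2$ is controlled uniformly by Lemma \ref{Lem1}.

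The genuinely delicate term, and the one I expect to be the main obstacle, is the pseudo-parabolic velocity contribution $\frac{1}{\tau}(-\diver(\beta\nabla(u_{\varepsilon,i}-u_{\varepsilon,i-1})), A_N u_{\varepsilon,i})_H$. Integration by parts splits it into the benign telescoping piece $\frac{1}{2\tau}\int_\Omega \beta\bigl(|\Lap u_{\varepsilon,i}|^2 - |\Lap u_{\varepsilon,i-1}|^2\bigr)\,dx$ and a cross term $\frac{1}{\tau}(\nabla\beta\cdot\nabla(u_{\varepsilon,i}-u_{\varepsilon,i-1}),\Lap u_{\varepsilon,i})_H$. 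The explicit $\frac{1}{\tau}$ in front of a time-difference of the gradient threatens to blow up as $\tau\downarrow 0$. I would tame it by Young's inequality, trading against $\frac{\delta_*}{2\tau^2}|\nabla(u_{\varepsilon,i}-u_{\varepsilon,i-1})|_{[H]^N}^2$, and then exploit the difference inequality \eqref{gronwall2} already established in the proof of Lemma \ref{Lem1}: it bounds $\frac{\delta_*}{\tau}|\nabla(u_{\varepsilon,i}-u_{\varepsilon,i-1})|_{[H]^N}^2$ by the telescoping increment $\int_\Omega \alpha\gamma_\varepsilon(\nabla u_{\varepsilon,i-1})\,dx - \int_\Omega \alpha\gamma_\varepsilon(\nabla u_{\varepsilon,i})\,dx$ plus a multiple of $|f_i|_H^2$. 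This forces the conserved functional to be taken as
\begin{equation*}
Y_i := |\nabla u_{\varepsilon,i}|_{[H]^N}^2 + \int_\Omega \beta|\Lap u_{\varepsilon,i}|^2\,dx + \int_\Omega \alpha\gamma_\varepsilon(\nabla u_{\varepsilon,i})\,dx,
\end{equation*}
so that the dangerous cross term is reabsorbed as an admissible telescoping increment of $Y_i$; this is precisely the mechanism by which the pseudo-parabolic regularization pays for the singular diffusion.

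Collecting the four estimates gives a discrete Gronwall inequality $\frac{1}{\tau}(Y_i - Y_{i-1}) \leq \tilde C_*(Y_i + |f_i|_H^2 + 1)$, with $\tilde C_*$ depending on the data through $|\nabla\beta|_{[L^\infty(\Omega)]^N}$, the constant $C_1(\frac{1}{2C_0})$ from Lemma \ref{est99}, and the $V$-bound of Lemma \ref{Lem1}, but independent of $\varepsilon$, $i$, and $\tau$. Choosing $\tau$ small enough that $1 - \tau\tilde C_* > \frac{1}{2}$ and applying the discrete Gronwall lemma yields a uniform bound $Y_i \leq \tilde C_* e^{2\tilde C_* T}(Y_0 + |f|_{\sH}^2 + T + 1)$. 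I would finish by converting back to the $H^2$-norm via \eqref{embb01}, using $|\Lap u_{\varepsilon,i}|_H^2 \leq \frac{1}{\delta_* \wedge 1}Y_i$ and controlling $|u_{\varepsilon,i}|_H^2$ by Lemma \ref{Lem1}, while estimating the initial datum $Y_0$ in terms of $|u_0|_{H^2(\Omega)}^2$ and $\varepsilon^2$ through $\gamma_\varepsilon(\nabla u_0) \leq \varepsilon + |\nabla u_0|$. This produces the asserted constant $C_5$, manifestly independent of $\varepsilon$ and $i$.
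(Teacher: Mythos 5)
Your proposal is correct and follows essentially the same route as the paper's proof: testing against $A_N u_{\varepsilon,i}$, invoking Lemma \ref{est99} with $\delta = \frac{1}{2C_0}$, absorbing the cross term $\frac{1}{\tau}(\nabla\beta\cdot\nabla(u_{\varepsilon,i}-u_{\varepsilon,i-1}),\Lap u_{\varepsilon,i})_H$ via Young's inequality and the difference inequality \eqref{gronwall2} from Lemma \ref{Lem1}, and closing with the discrete Gronwall lemma for the same functional $Y_i$. The identification of the cross term as the delicate point and its resolution through the telescoping increment of $\int_\Omega \alpha\gamma_\varepsilon(\nabla u_{\varepsilon,i})\,dx$ is exactly the mechanism used in the paper.
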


\begin{lem} \label{Lem3}
    There exists a constant $C_6 > 0$, independent of $\varepsilon $ and $ i$, such that
  \begin{equation}
    \frac{1}{\tau} \sum_{i=1}^{n_\tau} |u_{\varepsilon,i} - u_{\varepsilon,i-1}|_{H^2(\Omega)}^2 \leq \frac{C_6}{\varepsilon^2 \wedge 1} \left( |u_0|_{H^2(\Omega)}^2 + |f|_\sH^2 + \varepsilon^2 + 1 \right).
  \end{equation}
  \begin{proof}
    
Let us consider to take the inner product of (AP)$_\varepsilon^\tau$ with $A_N (u_{\varepsilon, i} - u_{\varepsilon,i-1}) \ (= -\Lap (u_{\varepsilon, i} - u_{\varepsilon,i-1}))$. 

First, by Green's formula, we immediately have:
  \begin{gather}
    \left(\frac{u_{\varepsilon,i} - u_{\varepsilon,i-1}}{\tau}, A_N (u_{\varepsilon,i} - u_{\varepsilon,i-1})\right)_H = \frac{1}{\tau} |\nabla (u_{\varepsilon,i} - u_{\varepsilon,i-1})|_{[H]^N}^2 \geq 0, \label{eq:TD.H2.1}
    \\
    \mbox{for  $ i = 1,2,3 ,\dots, n_\tau $. }
  \end{gather}
      Secondly, from \eqref{exM01} in Example \ref{exConvex}, it is estimated that:
\begin{align}
      \bigl| \diver\left( \alpha \nabla \gamma_\varepsilon(\nabla u_{\varepsilon, i}) \right) \bigr|_H^2 & = \left| \nabla \alpha \cdot \nabla \gamma_\varepsilon(\nabla u_{\varepsilon, i}) + \alpha \sum_{k,j=1}^N [\partial_k \partial_j \gamma_\varepsilon](\nabla u_{\varepsilon,i}) \, \partial_j \partial_k u_{\varepsilon,i} \right|_H^2 \\
    & \leq (N^2 + 1) \left( |\alpha|_V^2 + \frac{|\alpha|_{L^\infty(\Omega)}^2}{\varepsilon^2} |u_{\varepsilon,i}|_{H^2(\Omega)}^2 \right),
\end{align}
and therefore,
\begin{align}
    &\left( -\diver\left( \alpha \nabla \gamma_\varepsilon(\nabla u_{\varepsilon,i}) \right), A_N (u_{\varepsilon,i} - u_{\varepsilon,i-1}) \right)_H 
  \\
  & \quad \geq -\frac{\delta_*}{4\tau} |A_N (u_{\varepsilon,i} - u_{\varepsilon,i-1}) |_H^2 - \frac{\tau(N^2 + 1)}{\delta_*}\left( |\alpha|_V^2 + \frac{|\alpha|_{L^\infty(\Omega)}^2}{\varepsilon^2} |u_{\varepsilon,i}|_{H^2(\Omega)}^2 \right),
    \label{eq:TD.H2.2}
  \\
  & \quad \qquad \mbox{for } i = 1,2,3, \dots, n_\tau.
\end{align}
Finally, by using Young's inequality, we can compute that:
\begin{align}
  &\left( -\diver\left( \frac{\beta}{\tau} \nabla (u_{\varepsilon, i} - u_{\varepsilon,i-1}) \right), A_N (u_{\varepsilon, i} - u_{\varepsilon,i-1}) \right)_H 
    \\
    = & \frac{1}{\tau} \left( \beta \Lap (u_{\varepsilon,i} - u_{\varepsilon,i-1}), \Lap (u_{\varepsilon,i} - u_{\varepsilon,i-1}) \right)_H 
    \\
    & \qquad + \frac{1}{\tau} \left( \nabla \beta \cdot \nabla(u_{\varepsilon,i} - u_{\varepsilon,i-1}), \Lap (u_{\varepsilon}- u_{\varepsilon,i-1}) \right)_H\\
    \geq & \frac{3\delta_*}{4\tau} |A_N (u_{\varepsilon,i}- u_{\varepsilon,i-1})|_H^2 - \frac{|\nabla \beta|_{[L^{\infty}(\Omega)]^N}^2}{\delta_* \tau} |\nabla (u_{\varepsilon, i} - u_{\varepsilon,i-1}) |_{[H]^N}^2,
\label{eq:TD.H2.3}
\end{align}
and
\begin{align}
  &\bigl( f_i, A_N (u_{\varepsilon,i} - u_{\varepsilon,i-1}) \bigr)_H \leq \frac{\delta_*}{4 \tau} |A_N (u_{\varepsilon,i} - u_{\varepsilon,i-1})|_H^2 + \frac{\tau}{\delta_*} |f_i|_H^2, \label{eq:TD.H2.4}
  \\
  &\qquad\qquad\qquad\qquad \mbox{ for } i = 1,2,3, \dots, n_\tau.
\end{align}

Now, taking into account \eqref{eq:TD.H2.1}--\eqref{eq:TD.H2.4}, and Lemmas \ref{Lem1} and \ref{Lem2}, it is inferred that:
\begin{subequations}\label{4.16ab}
\begin{align}
  &\frac{\delta_*}{4\tau} \sum_{i = 1}^{n_\tau} |A_N(u_{\varepsilon,i} - u_{\varepsilon,i-1} )|_H^2
    \leq \frac{|\nabla \beta|_{[L^{\infty}(\Omega)]^N}^2}{\delta_*^2} \sum_{i=1}^{n_\tau} \frac{\delta_*}{\tau} |\nabla (u_{\varepsilon,i} - u_{\varepsilon,i-1})|_{[H]^N}^2 
    \\
    & \qquad + \frac{(N^2 + 1)|\alpha|_{L^\infty(\Omega)}^2}{\varepsilon^2 \delta_*} \cdot \tau \sum_{i=1}^{n_\tau} |u_{\varepsilon,i}|_{H^2(\Omega)}^2
    + \frac{T(N^2 + 1)|\alpha|_V^2}{\delta_*} + \frac{\tau}{\delta_*} \sum_{i=1}^{n_\tau} |f_i|_H^2
  \\
  & \leq \frac{\tilde C_2}{\varepsilon^2 \wedge 1} \bigl( |u_0|_{H^2(\Omega)}^2 + |f|_{\sH}^2 + \varepsilon^2 + 1 \bigr), \label{eq:TD.H2.5}
\end{align}
where
\begin{equation}
    \tilde C_2 := \frac{1}{\delta_*^2 \wedge 1}\Bigl( C_4|\nabla \beta|_{[L^{\infty}(\Omega)]^N}^2 + \bigl( N^2 + 1 \bigr) (T+1) \bigl( C_5 |\alpha|_{L^\infty(\Omega)}^2 + |\alpha|_V^2 \bigr) + 1 \Bigr). \label{eq:TD.H2.6}
\end{equation}
\end{subequations}\noeqref{eq:TD.H2.5, eq:TD.H2.6}
As a consequence of \eqref{embb01}, \eqref{4.16ab} and Lemma \ref{Lem1}, we arrive at:
\begin{align}
  &\frac{1}{\tau} \sum_{i=1}^{n_\tau} |u_{\varepsilon,i} - u_{\varepsilon,i-1}|_{H^2(\Omega)}^2 \leq \frac{C_0}{\tau} \sum_{i=1}^{n_\tau} \bigl( |A_N(u_{\varepsilon,i} - u_{\varepsilon,i-1})|_H^2 + |u_{\varepsilon,i} - u_{\varepsilon,i-1}|_H^2 \bigr)
  \\
  &\quad \leq \frac{1}{\varepsilon^2 \wedge 1} \cdot C_0 \left( \frac{4 \tilde C_2}{\delta_*} + C_4 \right) \bigl( |u_0|_{H^2(\Omega)}^2 + |f|_{\sH}^2 + \varepsilon^2 + 1 \bigr),
\end{align}
where 
\begin{equation}
  C_6 := C_0 \left( \frac{4 \tilde C_2}{\delta_*} + C_4 \right).
\end{equation}

Thus, the proof of this lemma is completed.
\end{proof}
\end{lem}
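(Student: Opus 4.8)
The plan is to test the time-discrete scheme (AP)$_\varepsilon^\tau$ against the multiplier $A_N(u_{\varepsilon,i} - u_{\varepsilon,i-1}) = -\Lap(u_{\varepsilon,i} - u_{\varepsilon,i-1})$, which is the natural choice for extracting $H^2$-control of the increments: since each $u_{\varepsilon,i} \in W_0$, the increment lies in $W_0$ as well, so the test is admissible, and the equivalence \eqref{embb01} will let me convert bounds on $|A_N(u_{\varepsilon,i}-u_{\varepsilon,i-1})|_H$ into bounds on $|u_{\varepsilon,i}-u_{\varepsilon,i-1}|_{H^2(\Omega)}$. I would then treat the resulting identity term by term. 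The time-difference term $\frac{1}{\tau}(u_{\varepsilon,i}-u_{\varepsilon,i-1}, A_N(u_{\varepsilon,i}-u_{\varepsilon,i-1}))_H$ equals $\frac{1}{\tau}|\nabla(u_{\varepsilon,i}-u_{\varepsilon,i-1})|_{[H]^N}^2 \geq 0$ by Green's formula together with the Neumann condition, so it may simply be dropped from below; the forcing term is controlled by Young's inequality into a small multiple $\frac{\delta_*}{4\tau}|A_N(u_{\varepsilon,i}-u_{\varepsilon,i-1})|_H^2$ plus $\frac{\tau}{\delta_*}|f_i|_H^2$.

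The two diffusion fluxes are the decisive terms. For the pseudo-parabolic (velocity) flux, integrating $\frac{1}{\tau}(-\diver(\beta\nabla(u_{\varepsilon,i}-u_{\varepsilon,i-1})), A_N(u_{\varepsilon,i}-u_{\varepsilon,i-1}))_H$ by parts produces a leading coercive term $\frac{1}{\tau}(\beta\Lap(u_{\varepsilon,i}-u_{\varepsilon,i-1}),\Lap(u_{\varepsilon,i}-u_{\varepsilon,i-1}))_H \geq \frac{\delta_*}{\tau}|A_N(u_{\varepsilon,i}-u_{\varepsilon,i-1})|_H^2$, using $\beta \geq \delta_*$, together with a cross term carrying $\nabla\beta$ that Young's inequality absorbs, leaving at least $\frac{3\delta_*}{4\tau}|A_N(u_{\varepsilon,i}-u_{\varepsilon,i-1})|_H^2$ up to a lower-order $\frac{1}{\tau}|\nabla(u_{\varepsilon,i}-u_{\varepsilon,i-1})|_{[H]^N}^2$ remainder. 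For the singular (total-variation-type) flux I would estimate $|\diver(\alpha\nabla\gamma_\varepsilon(\nabla u_{\varepsilon,i}))|_H$ head-on: by the chain rule and the Hessian bound $|\partial_i\partial_j\gamma_\varepsilon| \leq 1/\varepsilon$ from \eqref{exM01}, this $L^2$-norm is dominated by $|\alpha|_V$ plus $\frac{|\alpha|_{L^\infty(\Omega)}}{\varepsilon}|u_{\varepsilon,i}|_{H^2(\Omega)}$, so Young's inequality bounds the corresponding pairing from below by $-\frac{\delta_*}{4\tau}|A_N(u_{\varepsilon,i}-u_{\varepsilon,i-1})|_H^2$ minus an error of order $\frac{\tau}{\varepsilon^2}|u_{\varepsilon,i}|_{H^2(\Omega)}^2$.

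Assembling the four estimates, the coercive $\frac{3\delta_*}{4\tau}$ coefficient absorbs both negative $\frac{\delta_*}{4\tau}$ contributions and leaves $\frac{\delta_*}{4\tau}|A_N(u_{\varepsilon,i}-u_{\varepsilon,i-1})|_H^2$ on the left, bounded above by the $\varepsilon^{-2}$-weighted error terms. Summing over $i = 1,\dots,n_\tau$, the quantity $\tau\sum_i|u_{\varepsilon,i}|_{H^2(\Omega)}^2$ is bounded by $T$ times the uniform $H^2$-estimate of Lemma \ref{Lem2}, the remainder $\sum_i\frac{\delta_*}{\tau}|\nabla(u_{\varepsilon,i}-u_{\varepsilon,i-1})|_{[H]^N}^2$ is controlled by Lemma \ref{Lem1}, and $\frac{\tau}{\delta_*}\sum_i|f_i|_H^2$ is handled by \eqref{fest}; together these yield a bound of the claimed form with the $(\varepsilon^2\wedge 1)^{-1}$ prefactor. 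One final use of \eqref{embb01}, converting $|A_N(u_{\varepsilon,i}-u_{\varepsilon,i-1})|_H^2 + |u_{\varepsilon,i}-u_{\varepsilon,i-1}|_H^2$ back into $|u_{\varepsilon,i}-u_{\varepsilon,i-1}|_{H^2(\Omega)}^2$ (the second summand again absorbed via Lemma \ref{Lem1}), completes the estimate.

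I expect the main obstacle to be the singular diffusion term: the only available pointwise bound on the Hessian of $\gamma_\varepsilon$ blows up like $1/\varepsilon$, so its $L^2$-norm is not uniform in $\varepsilon$, and this is exactly the source of the $(\varepsilon^2\wedge 1)^{-1}$ degeneration in the conclusion. The argument closes only because the pseudo-parabolic term furnishes a genuinely coercive $\frac{\delta_*}{\tau}|A_N(u_{\varepsilon,i}-u_{\varepsilon,i-1})|_H^2$ (thanks to $\delta_* = \inf\beta(\Omega) > 0$) large enough to swallow the negative contribution from the singular flux; getting the competing $\frac{\delta_*}{4\tau}|A_N(u_{\varepsilon,i}-u_{\varepsilon,i-1})|_H^2$ terms to balance with the correct numerical coefficients, while keeping the $\varepsilon$-dependence isolated in the error, is the delicate bookkeeping step.
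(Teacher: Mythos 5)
Your proposal is correct and follows essentially the same route as the paper: testing (AP)$_\varepsilon^\tau$ with $A_N(u_{\varepsilon,i}-u_{\varepsilon,i-1})$, dropping the nonnegative time-difference term, bounding $\diver(\alpha\nabla\gamma_\varepsilon(\nabla u_{\varepsilon,i}))$ in $L^2$ via the Hessian bound $|\partial_k\partial_j\gamma_\varepsilon|\le 1/\varepsilon$ from \eqref{exM01}, extracting coercivity $\tfrac{3\delta_*}{4\tau}|A_N(\cdot)|_H^2$ from the $\beta$-flux, and closing by summation with Lemmas \ref{Lem1}, \ref{Lem2}, \eqref{fest}, and \eqref{embb01}. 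The bookkeeping of the competing $\tfrac{\delta_*}{4\tau}$ terms and the isolation of the $(\varepsilon^2\wedge 1)^{-1}$ factor match the paper's argument exactly.
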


\noeqref{eq:TD.H2.2,eq:TD.H2.3}

\begin{proof}[Proof of Main Theorem 1]
Let $\varepsilon > 0$ be a fixed constant. Let $\tau_* \in (0,\tau_0)$ be the small constant as in Lemma \ref{Lem2}. Then, from Lemmas \ref{Lem1}--\ref{Lem3}, it is observed that
\begin{itemize}
  \item[$\sharp$3)] $\{ [u_\varepsilon]_\tau \,|\, \tau \in (0,\tau_*) \}$ is bounded in $W^{1,2}(0,T;W_0)$.
\end{itemize}
Additionally, by applying Ascoli's theorem (cf. \cite[Corollary 4]{MR0916688}), we can infer that
\begin{itemize}
    \item[$\sharp$4)] $\{ [u_\varepsilon]_\tau \,|\, \tau \in (0,\tau_*) \}$ and $\{ [\overline{u_\varepsilon}]_\tau \,|\, \tau \in (0,\tau_*) \}$ are relatively compact in $C([0,T];V)$ and weakly-$*$ compact in $L^\infty(0,T;W_0)$, respectively. 
\end{itemize}
As a consequence of $\sharp$3) and $\sharp$4), we can find a sequence $\{ \tau_n \}_{n=1}^\infty \subset (0,\tau_*)$, and a function $u_\varepsilon \in W^{1,2}(0,T;W_0)$, such that:
\begin{gather}
  \tau_* > \tau_1 > \tau_2 > \dots > \tau_n \downarrow 0 \mbox{ as } n \to \infty,
  \\
  u_{\varepsilon,n} := [u_\varepsilon]_{\tau_n} \to u_\varepsilon \mbox{ in } C([0,T];V), \mbox{ weakly in } W^{1,2}(0,T; W_0), \label{eq:conv01}
  \\
  \mbox{ and weakly-$*$ in } L^\infty(0,T;W_0), \mbox{ as } n \to \infty,
\end{gather}
and
\begin{align}
  &\overline{u}_{\varepsilon,n} := [\overline{u_\varepsilon}]_{\tau_n} \to u_\varepsilon \mbox{ in } L^\infty(0,T; V) 
  \\
  &\quad \mbox{ and weakly-$*$ in } L^\infty(0,T;W_0), \mbox{ as } n \to \infty. \label{eq:conv02}
\end{align}
    Note that this $u_\varepsilon$ satisfies Main Theorem 1 (S0)$_\varepsilon$.

    Next, let us take arbitrary open interval $I \subset (0,T)$. Then, in the light of \eqref{AP.sol}, \eqref{eq:conv01} and \eqref{eq:conv02}, one can see that:
\begin{gather}
  \int_I (\partial_t u_{\varepsilon,n}(t), \varphi)_H \,dt + \int_I (\alpha \nabla \gamma_\varepsilon(\nabla \overline{u}_{\varepsilon,n}(t)), \nabla \varphi)_{[H]^N} \,dt  
  \\
    + \int_I (\beta \nabla \partial_t u_{\varepsilon,n}(t), \nabla \varphi)_{[H]^N}\,dt =\int_I ([\overline{f}]_{\tau_n}(t), \varphi)_H \,dt, 
    \label{eq:conv03}
    \\
    \mbox{for all $ \varphi \in V $  and  $ n = 1,2,3, \dots $.}
\end{gather}
    Due to \eqref{eq:conv01}, \eqref{eq:conv02}, and (A3), letting $n \to \infty$ in \eqref{eq:conv03} yields that:
\begin{align}
  &\int_I (\partial_t u_\varepsilon(t), \varphi)_H \,dt + \int_I (\alpha \nabla \gamma_\varepsilon(\nabla u_\varepsilon(t)), \nabla \varphi)_{[H]^N} \,dt 
  \\
  &\qquad + \int_I (\beta \nabla \partial_t u_\varepsilon(t), \nabla \varphi)_{[H]^N}\,dt =\int_I (f(t), \varphi)_H \,dt, \ \mbox{for all } \varphi \in V. \label{eq:conv04}
\end{align}
Since the open interval $I \subset (0,T)$ is arbitrary, the above identity implies that $u_\varepsilon$ satisfies the equation \eqref{rx:Ev} as in Main Theorem 1 (S1)$_\varepsilon$. Moreover, the regularity $u_\varepsilon \in W^{1,2}(0,T;W_0)$ directly leads to the boundary condition \eqref{rx:BC}.

    Finally, we verify the uniqueness of the solution to (P)$_\varepsilon$. Let us take two solutions $u_\varepsilon^k \in W^{1,2}(0,T;W_0)$, $ k = 1, 2 $, corresponding to initial data $u_0^k \in W_0$ and forcing terms $f^k \in \sH, \ k = 1,2$, respectively. Here, taking the differences between the variational identities \eqref{rx:Ev} for $u_\varepsilon^k, \, k = 1,2$, and putting $\varphi = (u_\varepsilon^1 - u_\varepsilon^2)(t)$, we can see that:
\begin{align}
  &\frac{1}{2} \frac{d}{dt} \bigl| (u_\varepsilon^1 - u_\varepsilon^2)(t) \bigr|_H^2 + \frac{1}{2} \frac{d}{dt} \bigl| \sqrt{\beta} \nabla (u_\varepsilon^1 - u_\varepsilon^2)(t) \bigr|_{[H]^N}^2 \label{mark01}
  \\
  &\qquad +\int_\Omega \alpha(\nabla \gamma_\varepsilon(\nabla u_\varepsilon^1(t)) - \nabla \gamma_\varepsilon(\nabla u_\varepsilon^2(t))) \cdot \nabla (u_\varepsilon^1 - u_\varepsilon^2)(t) \,dx 
  \\
  &\qquad\leq \bigl( (f^1 - f^2)(t), (u_\varepsilon^1 - u_\varepsilon^2)(t) \bigr)_H, \ \mbox{a.e. }t \in (0,T).
\end{align}
Besides, by using the monotonicity of $\nabla \gamma_\varepsilon \subset \R^N \times \R^N$, and Young's inequality, one can deduce that:
\begin{align}
  &\frac{d}{dt} \left(\bigl| (u_\varepsilon^1 - u_\varepsilon^2)(t) \bigr|_H^2 + \bigl| \sqrt{\beta} \nabla (u_\varepsilon^1 - u_\varepsilon^2)(t) \bigr|_{[H]^N}^2\right) \label{mark02}
  \\
  &\qquad \leq |(u_\varepsilon^1 - u_\varepsilon^2)(t)|_H^2 + |(f^1 - f^2)(t)|_H^2, \ \mbox{a.e. } t \in (0,T).
\end{align}\noeqref{mark02}
Applying Gronwall's inequality, we arrive at:
\begin{gather}
  \bigl| (u_\varepsilon^1 - u_\varepsilon^2)(t) \bigr|_H^2 + \bigl| \sqrt{\beta} \nabla (u_\varepsilon^1 - u_\varepsilon^2)(t) \bigr|_{[H]^N}^2 
  \\
  \leq e^T \bigl( |u_0^1 - u_0^2|_H^2 + |\sqrt{\beta} \nabla (u_0^1 - u_0^2)|_{[H]^N}^2 + |f^1 - f^2|_\sH^2 \bigr),
    \label{mark03}
  \\
  \mbox{ for all }t \in [0,T].
\end{gather}
This implies the uniqueness of solution to (P)$_\varepsilon$.

Thus, we conclude Main Theorem 1.
\end{proof}

\begin{proof}[Proof of Main Theorem 2]
    For any $\varepsilon > 0$, let $u_\varepsilon \in W^{1,2}(0,T;W_0)$ be the unique solution to (P)$_\varepsilon$. Then, on account of \eqref{eq:conv01}, \eqref{eq:conv02}, and Lemmas \ref{Lem1} and \ref{Lem2}, we will estimate that:
  \begin{gather}
    \int_0^T \bigl( |\partial_t u_\varepsilon(t)|_H^2 + \delta_* |\nabla \partial_t u_\varepsilon(t)|_{[H]^N}^2 \bigr)\,dt  
    \\
    \leq C_4 \bigl( |u_0|_V^2 + |f|_{\sH}^2 + \varepsilon^2 + 1 \bigr), ~ \mbox{for all } \varepsilon > 0,
      \label{est:Ken01}
  \end{gather}
and
  \begin{equation}
      |u_\varepsilon|_{L^\infty(0,T;H^2(\Omega))}^2 \leq C_5 \bigl( |u_0|_{H^2(\Omega)}^2 + |f|_{\sH}^2 + \varepsilon^2 + 1 \bigr), \ \mbox{ for all } \varepsilon > 0, \label{est:Ken02}
  \end{equation}
    with the constants $C_4, C_5 > 0$ as in Lemma \ref{Lem1} and Lemma \ref{Lem2}, respectively. Also, from the assumption (A3) with \eqref{exM01} and Example \ref{Rem.ExMG}, 
  \begin{gather}
    |\nabla \gamma_\varepsilon(\nabla u_\varepsilon)| \leq 1 \mbox{ a.e. in } Q, \label{est:Ken03}
      \\[2ex]
      \alpha \nabla \gamma_\varepsilon(\nabla u_\varepsilon(t)) \in \partial \Phi_\varepsilon(\nabla u_\varepsilon(t)) \mbox{ in } [H]^N, \mbox{ for any $ \varepsilon > 0 $,  and a.e. $ t \in (0,T) $,} \label{est:Ken04}
  \end{gather} 
  and hence,
  \begin{align}
    &\alpha \nabla \gamma_\varepsilon(\nabla u_\varepsilon) \in \partial \widehat{\Phi}_\varepsilon^I(\nabla u_\varepsilon) \mbox{ in } [\sH]^N, \label{est:Ken0401}
    \\
    &\qquad\mbox{ for any } \varepsilon > 0, \mbox{ and any open interval }I \subset (0,T).
  \end{align}
    Since the constants $C_4$ and $C_5$ are independent of $\varepsilon$, we deduce from \eqref{est:Ken01} and \eqref{est:Ken04} that:
  \begin{itemize}
    \item[$\sharp$5)] $\{ u_\varepsilon \}_{\varepsilon \in (0,1)}$ is bounded in $W^{1,2}(0,T;V) \cap L^\infty(0,T;W_0)$, and $\{ \nabla \gamma_\varepsilon(\nabla u_\varepsilon) \}_{\varepsilon \in (0,1)}$ is included in the unit ball in $L^\infty(Q;\R^N)$.
  \end{itemize}
    So, by applying the compactness theory of Aubin's type (cf. \cite[Corollary 4]{MR0916688}), we can find a limit $u \in W^{1,2}(0,T;V) \cap L^\infty(0,T;W_0)$ of a subsequence of $\{ u_\varepsilon \}_{\varepsilon \in (0,1)}$ (not relabeled), with another limit ${\bm \omega}^* \in L^\infty(Q;\R^N)$ of $\{ \nabla \gamma_\varepsilon(\nabla u_\varepsilon) \}_{\varepsilon \in (0,1)}$, such that:
\begin{gather}
    u_\varepsilon \to u \mbox{ in } C([0,T];V), \mbox{ weakly in } W^{1,2}(0,T; V), 
    \\
    \mbox{ and weakly-$*$ in } L^\infty(0,T;W_0), \mbox{ as }\varepsilon \downarrow 0,
      \label{est:Ken05} 
  \end{gather}
    and
  \begin{equation}
    \alpha \nabla \gamma_\varepsilon(\nabla u_\varepsilon) \to \alpha {\bm \omega}^* \mbox{ weakly-$*$ in } L^\infty(Q;\R^N). \label{est:Ken07}
  \end{equation}\noeqref{est:Ken05}
  
  In view of this, let us take a limit $\varepsilon \downarrow 0$ in \eqref{eq:conv04}. Then, by virtue of \eqref{est:Ken04}--\eqref{est:Ken07}, we can deduce that:
  \begin{align}
    &\int_I (\partial_t u(t), \varphi)_H \,dt + \int_I (\alpha {\bm \omega}^*(t), \nabla \varphi)_{[H]^N} \,dt + \int_I (\beta \nabla \partial_t u(t), \nabla \varphi)_{[H]^N}\,dt \label{est:Ken08}
    \\
    &\qquad =\int_I (f(t), \varphi)_H \,dt, \ \mbox{for any } \varphi \in V, \mbox{ and any open interval } I \subset (0,T),
  \end{align}
  and
  \begin{equation}
    u(0) = \lim_{\varepsilon \downarrow 0} u_\varepsilon(0) = u_0 \in W_0 \mbox{ in } V. \label{est:Ken09}
  \end{equation}
  Furthermore, invoking \eqref{est:Ken0401}, \eqref{est:Ken07}, Example \ref{Rem.ExMG}, and Remark \ref{Rem.MG} (\hyperlink{Fact\,1}{Fact1}), one can say that:
  \begin{equation}
    \alpha {\bm \omega}^* \in \partial \widehat{\Phi}_0^I(\nabla u) \mbox{ in } L^2(0,T;[H]^N),
  \end{equation}
  and hence,
  \begin{equation}
    \alpha {\bm \omega}^* \in \partial \Phi_0(\nabla u) \mbox{ in } [H]^N, \mbox{ for a.e. } t \in (0,T), \mbox{ and } {\bm \omega}^* \in \Sgn(\nabla u) \mbox{ a.e. in } Q. \label{est:Ken10}
  \end{equation} \noeqref{est:Ken09}
  \eqref{est:Ken08}--\eqref{est:Ken10} imply that $u$ is a solution to the problem (P)$_0$.
  
  Finally, for the verification of uniqueness, we take the solutions $u^k \in W^{1,2}(0,T;V) \cap L^\infty(0,T;W_0), \ k=1,2$, to (P)$_0$, which correspond to initial data $u_0^k \in W_0, \ k = 1,2$ and forcing terms $f^k \in \sH, \ k=1,2$, respectively. Then, with the (maximal) monotonicity of $\Sgn \subset \R^N \times \R^N$ in mind, we can apply the argument similar to \eqref{mark01}--\eqref{mark03}, and obtain the following estimate:
  \begin{align}
    &\bigl| (u^1 - u^2)(t) \bigr|_H^2 + \bigl| \sqrt{\beta} \nabla (u^1 - u^2)(t) \bigr|_{[H]^N}^2 
    \\
    &\qquad \leq e^T \bigl( |u_0^1 - u_0^2|_H^2 + |\sqrt{\beta} \nabla (u_0^1 - u_0^2)|_{[H]^N}^2 + |f^1 - f^2|_\sH^2 \bigr),
    \\
    &\hspace{5.5cm} \mbox{ for all }t \in [0,T].
  \end{align}
  This guarantees the uniqueness of solution to (P)$_0$. 
  
  Thus, the proof of Main Theorem 2 is completed.
  \end{proof}

\section{Appendix}
In this appendix, we prove some lemmas, which support the observations of $H^2$-regularities in the principal Sections 2--4.

In what follows, we use the notations as in Remark \ref{rem:NS}.

\begin{lem}\label{lem:Ap01}
  There exists a partition of unity $ \{ \tilde \eta_\ell \}_{\ell = 0}^M \subset C_c^\infty(\R^N)$ for the covering $ \{ U_\ell \}_{\ell = 0}^M $ of $\overline{\Omega}$, such that:
    \begin{equation}
      \nabla \tilde \eta_\ell \cdot n_\Gamma = 0, \ ~{\rm for}~ \ell = 0,1,2,\dots,M. \label{eq:partition.1}
    \end{equation}
\end{lem}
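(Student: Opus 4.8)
The plan is to first take an ordinary smooth partition of unity subordinate to the cover and then repair the normal derivatives by precomposing every bump function with a single, globally defined smooth self-map $\Phi$ of $\R^N$ whose differential annihilates the normal direction along $\Gamma$. Concretely, let $\{\eta_\ell\}_{\ell=0}^M\subset C_c^\infty(\R^N)$ be any partition of unity with $\supp\eta_\ell\subset U_\ell$ and $\sum_{\ell=0}^M\eta_\ell\equiv1$ on $\overline{\Omega}$, which exists by the standard construction since $\{U_\ell\}_{\ell=0}^M$ is a finite open cover of the compact set $\overline{\Omega}$. I would then set $\tilde\eta_\ell:=\eta_\ell\circ\Phi$. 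Because all the $\eta_\ell$ are composed with the same map, $\sum_\ell\tilde\eta_\ell=(\sum_\ell\eta_\ell)\circ\Phi\equiv1$ wherever $\Phi$ maps into $\{\sum_\ell\eta_\ell=1\}$, so the partition-of-unity property is automatically preserved provided $\Phi(\overline{\Omega})\subset\overline{\Omega}$. Moreover, by the chain rule, at a point $x_0\in\Gamma$ with $\Phi(x_0)=x_0$ one has $\nabla\tilde\eta_\ell(x_0)\cdot n_\Gamma=\nabla\eta_\ell(x_0)\cdot\bigl(D\Phi(x_0)\,n_\Gamma\bigr)$, so \eqref{eq:partition.1} reduces to the single geometric requirement $D\Phi\,n_\Gamma=0$ on $\Gamma$.

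To build such a $\Phi$, I would use the tubular-neighborhood structure of Remark \ref{rem:NS}: on $\bigcup_{\ell=1}^M U_\ell$ the distance function $d_\Gamma$ is $C^\infty$, the signed distance $\rho$ is smooth with $\nabla\rho=n_\Gamma$ on $\Gamma$, and every point close to $\Gamma$ has a well-defined nearest point $\pi(x)\in\Gamma$ with $x=\pi(x)+\rho(x)\nabla\rho(x)$. Fixing $h_1>0$ small enough that $\{|\rho|<h_1\}$ lies in this tube, misses $\overline{U_0}$, and satisfies $h_1<\min_{1\le\ell\le M}h_\ell$, I would choose a function $\psi\in C^\infty(\R)$ with $\psi(s)=s$ for $|s|\ge h_1$, $\psi(0)=0$, $\psi'(0)=0$, $\psi(s)\le0$ for $s\in[-h_1,0]$, and $\psi\bigl((-h_1,h_1)\bigr)\subset(-h_1,h_1)$, and then define $\Phi$ by reparametrizing the normal coordinate, $\Phi(x):=x+\bigl(\psi(\rho(x))-\rho(x)\bigr)\nabla\rho(x)$ on the tube and $\Phi:=\mathrm{id}$ elsewhere. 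The two definitions match smoothly across $\{|\rho|=h_1\}$, so $\Phi\in C^\infty(\R^N;\R^N)$; the sign condition on $\psi$ gives $\Phi(\overline{\Omega})\subset\overline{\Omega}$; and since $\Phi$ moves points only along their normal rays within the slab $\{|\rho|<h_1\}\subset\{|\rho|<h_\ell\}$, it preserves the chart slabs, whence $\supp\tilde\eta_\ell\subset U_\ell$ for each $\ell$ (and $\Phi=\mathrm{id}$ on $\supp\eta_0$, so $\tilde\eta_0=\eta_0$).

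It then remains to verify $D\Phi\,n_\Gamma=0$ on $\Gamma$. In normal coordinates $\Phi$ acts as $(\sigma,s)\mapsto(\sigma,\psi(s))$, so its differential sends the normal vector $\partial_s\leftrightarrow n_\Gamma$ to $\psi'(0)\,\partial_s=0$ at $s=0$, giving $D\Phi(x_0)\,n_\Gamma=0$ for every $x_0\in\Gamma$; combined with the chain-rule identity above this yields $\nabla\tilde\eta_\ell\cdot n_\Gamma=0$ on $\Gamma$ for $\ell=1,\dots,M$, while for $\ell=0$ the function $\tilde\eta_0=\eta_0$ vanishes on a neighborhood of $\Gamma$ and the condition is trivial. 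I expect the main obstacle to be the careful construction of $\Phi$ in the second step: one must simultaneously keep $\Phi$ globally smooth, ensure it preserves each slab so that supports stay inside the corresponding $U_\ell$, and guarantee $\Phi(\overline{\Omega})\subset\overline{\Omega}$ so that the partition-of-unity identity survives the composition. All of these hinge on the smoothness of $d_\Gamma$ near $\Gamma$ and the slab description of the $U_\ell$ recorded in Remark \ref{rem:NS}.
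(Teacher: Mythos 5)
Your construction is correct, but it is genuinely different from the paper's. The paper builds the cutoffs from scratch in the flattened chart coordinates of Remark \ref{rem:NS}: it takes bumps $\zeta_\ell$ on the slabs $W_\ell$, symmetrizes them by even reflection in $z_N$ (so that $\partial_{z_N}=0$ at $z_N=0$, which via \eqref{bc0} is exactly the Neumann condition on $\Gamma$), pulls them back through $\Theta_\ell\circ\Xi_\ell$, adds an interior bump and a far-field correction $1-\eta_*$, and finally renormalizes by dividing by the total sum, checking with the quotient rule that the normal derivative condition survives the division. You instead start from an arbitrary smooth partition of unity and precompose everything with one global normal-collapsing map $\Phi=\mathrm{id}+(\psi\circ\rho-\rho)\nabla\rho$; the single identity $D\Phi\,n_\Gamma=0$ on $\Gamma$ (which indeed follows from $\psi'(0)=0$, since $D\Phi=I-n_\Gamma\otimes n_\Gamma$ there) then kills all the normal derivatives at once, and the partition identity is preserved for free because all the functions are composed with the same map, so no renormalization is needed. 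Your route is more economical in that it decouples the partition-of-unity construction from the boundary condition and avoids the quotient-rule verification; the paper's route avoids having to define and control a global map $\Phi$, in particular the two points where your argument needs the most care: (i) the support condition requires the \emph{preimage} containment $\Phi^{-1}(U_\ell)\subset U_\ell$ (not $\Phi(U_\ell)\subset U_\ell$), which does hold here because $\Phi$ fixes the nearest-point projection and maps the normal interval $(-h_1,h_1)$ into itself while equalling the identity outside the tube, but is the step one would have to write out; and (ii) one must fix the sign convention for the signed distance $\rho$ consistently so that $\psi\le 0$ on $[-h_1,0]$ really forces $\Phi(\overline{\Omega})\subset\overline{\Omega}$. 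Both proofs ultimately rest on the same geometric input, namely the tubular-neighborhood/slab description of $\Gamma$ in Remark \ref{rem:NS}; your $\psi$ with $\psi'(0)=0$ plays precisely the role of the paper's even reflection in $z_N$.
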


\begin{proof}
    First, we take a reduced open covering $ \{ U_\ell \}_{\ell = 0}^M $ of $\overline{\Omega}$, such that:
    \begin{equation}
      U_\ell' \subset \overline{U_\ell'} \subset U_\ell, \mbox{ for } \ell = 0,1,2,\dots, M, \mbox{ and } \bigcup_{\ell=1}^M U_\ell' \supset \Gamma.
    \end{equation} 
    Here, since:
    \begin{equation}
      (\Xi^{-1}_\ell \Theta_\ell^{-1}) U_\ell' \subset \overline{(\Xi^{-1}_\ell \Theta_\ell^{-1}) U_\ell'} \subset W_\ell \,\bigl(= (\Xi^{-1}_\ell \Theta_\ell^{-1}) U_\ell \bigr), \mbox{ for } \ell = 1,2,\dots, M,
    \end{equation}
    we can also take finite sets of positive constants $\{ r_\ell' \}_{\ell = 1}^M,\ \{ r_\ell'' \}_{\ell = 1}^M, \ \{ h_\ell' \}_{\ell = 1}^M$, and $\{ h_\ell'' \}_{\ell = 1}^M$, such that the following properties hold:
    \begin{equation}
      \left\{ \begin{aligned}
        r_\ell' < r_\ell'' < r_\ell,
        \\
        h_\ell' < h_\ell'' < h_\ell,
      \end{aligned} \right. ~~ \mbox{ for } \ell = 1,2,\ldots, M,
    \end{equation}
    and
    \begin{equation}
      \overline{(\Xi^{-1}_\ell \Theta_\ell^{-1}) U_\ell'} \subset W_\ell' \subset \overline{W_\ell'} \subset W_\ell'' \subset \overline{W_\ell''} \subset W_\ell, \mbox{ for } \ell = 1,2,\ldots, M,
    \end{equation}
    for the following cylindrical subdomains $W_\ell', \, W_\ell'' \subset W_\ell$:
    \begin{equation}
      \left\{ \begin{aligned}
        &W_\ell' := r_\ell' \B^{N-1} \times (-h_\ell' , h_\ell'), \\
        &W_\ell'' := r_\ell'' \B^{N-1} \times (-h_\ell'',h_\ell''),
      \end{aligned} \right. \mbox{ for } \ell = 1,\dots,M.
    \end{equation}

    For any $\ell \in \{1,\dots,M\}$, let $\zeta_\ell \in C_c^\infty(\R^N)$ be a function such that:
    \begin{equation}
      0 \leq \zeta_\ell \leq 1 ~{\rm on}~ \R^N, \ \zeta_\ell \equiv 1~{\rm on}~ W_\ell',\ \supp \zeta_\ell \subset W_\ell'', \mbox{ for } \ell =1,\ldots,M.
    \end{equation}
    Besides, we define:
    \begin{gather}
      \tilde \zeta_\ell(z', z_N) := \frac{1}{2} \bigl( \zeta_\ell(z', z_N) + \zeta_\ell(z',-z_N) \bigr),
        \\
        \mbox{ for all } z' \in \R^{N-1}, ~ z_N \in \R, \mbox{ and } \ell = 1,\dots,M.
    \end{gather}
    Clearly, 
    \begin{equation}
      0 \leq \tilde \zeta_\ell \leq 1 ~{\rm on}~ \R^N, \ \tilde \zeta_\ell \equiv 1~{\rm on}~ W_\ell',\ \supp \tilde \zeta_\ell \subset W_\ell'', \mbox{ for } \ell =1,\ldots,M.
    \end{equation}

    Next, let us define:
    \begin{equation}
      \eta_\ell(x) := \begin{dcases}
        \tilde \zeta_\ell \bigl((\Xi^{-1}_{\ell} \Theta_\ell^{-1})x \bigr), &{\rm if}~ x \in U_\ell \\
          0,& \mbox{otherwise,}
      \end{dcases} ~ \mbox{ for all } x \in \R^N, \ \ell = 1, \dots, M.
    \end{equation}
    Also, let $\eta_0 \in C_c^\infty(\R^N)$ be a function, such that:
    \begin{equation}
      0 \leq \eta_0 \leq 1 ~{\rm on}~ \R^N, \ \eta_0 \equiv 1 ~{\rm on}~ U_0',\ \supp \eta_0 \subset U_0.
    \end{equation}
    Additionally, let us take two bounded domains $\Omega', \, \Omega'' \subset \R^N$, and a function $\eta_* \in C_c^\infty(\R^N)$, such that:
    \begin{equation}
      \overline{\Omega} \subset \Omega' \subset \overline{\Omega'} \subset \Omega'' \subset \overline{\Omega''} \subset \bigcup_{\ell=0}^M U_\ell',
    \end{equation}
    and
    \begin{equation}
      0 \leq \eta_* \leq 1 ~{\rm on}~ \R^N, \ \eta_* \equiv 1 ~{\rm on}~ \overline{\Omega'},\ \supp \eta_* \subset \Omega''.
    \end{equation}
    As is easily checked,
    \begin{equation}
      \left\{ \begin{aligned}
        &\nabla \eta_\ell \cdot n_\Gamma = 0, \mbox{ for } \ell = 1,\dots,M,
        \\
        &\nabla \eta_* \cdot n_\Gamma = 0,
      \end{aligned} \right. \mbox{ on } \Gamma. \label{eq:normal01}
    \end{equation}

    Based on these, we define:
    \begin{equation}
      \tilde \eta(x) := (1 - \eta_*)(x) + \sum_{\ell=0}^M \tilde \eta_\ell(x), \mbox{ for all } x \in \R^N,
    \end{equation}
    with
    \begin{equation}
      \tilde \eta_\ell(x) := \frac{\eta_\ell(x)}{\tilde \eta(x)}, \mbox{ for all } x \in \R^N, \ \ell = 1,\dots,M. \label{eq:normal02}
    \end{equation}
    Then, having in mind:
    \begin{align}
      &\tilde \eta \in C^\infty(\R^N), \mbox{ and}
      \\
      &\tilde \eta(x) \geq \left\{\begin{aligned}
        &\eta_\ell(x) = 1, \mbox{ if } x \in U_\ell', \mbox{ for some } \ell \in \{ 0,1,\dots,M \},
        \\
        &(1 - \eta_*)(x) = 1, \mbox{ if } x \in \left( \bigcup_{\ell=0}^M U_\ell' \right)^{\rm \hspace{-0.6ex}C},
      \end{aligned} \right.
    \end{align}
    it is observed that:
    \begin{equation}
      \tilde \eta_\ell \in C_c^\infty(\R^N), \ 0 \leq \tilde \eta_\ell \leq 1 ~{\rm on}~ \R^N, \mbox{ and } \supp \tilde \eta_\ell \, (= \supp \eta_\ell)\, \subset U_\ell, \mbox{ for } \ell =1,\ldots,M.
    \end{equation}
    Also, since $1 - \eta_* \equiv 0$ on $\overline{\Omega}$, 
    \begin{equation}
      \sum_{\ell=0}^M \tilde \eta_\ell(x) = 1, \mbox{ for all } x \in \overline{\Omega}.
    \end{equation}
    Moreover, on account of \eqref{eq:normal01} and \eqref{eq:normal02}, it is verified that:
    \begin{equation}
      \nabla \tilde \eta_\ell \cdot n_\Gamma = \frac{\tilde \eta \bigl(\nabla \eta_\ell \cdot n_\Gamma\bigr) - \eta_\ell \bigl(\nabla \tilde \eta \cdot n_\Gamma\bigr)}{\tilde \eta^2} = 0, \mbox{ on } \Gamma, \mbox{ for } \ell = 0,1,\dots,M.
    \end{equation}

  Thus, we conclude this lemma.
  \end{proof}

\begin{lem}\label{lem:approx.}
  For any $v \in V$, there exists a sequence $\{ \varphi_\delta \}_{\delta > 0} \subset C^\infty(\overline{\Omega}) \cap W_0$ such that:
  \begin{equation}
    \varphi_\delta \to v \mbox{ in } V \mbox{ as }  \delta \downarrow 0, \label{eq:approx-C^infty}
  \end{equation}
  and in particular,
  \begin{equation}
    \varphi_\delta \to v \mbox{ in } H^2(\Omega) \mbox{ as }  \delta \downarrow 0, \mbox{ if } v \in W_0. \label{eq:approx-C^infty2}
  \end{equation}
\end{lem}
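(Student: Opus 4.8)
The plan is to combine a localization by the partition of unity from Lemma~\ref{lem:Ap01} with the classical device of even reflection across the flattened boundary followed by mollification. Writing $v = \sum_{\ell=0}^M \tilde\eta_\ell v$ with the partition $\{\tilde\eta_\ell\}_{\ell=0}^M$ of Lemma~\ref{lem:Ap01}, it suffices to approximate each localized piece $\tilde\eta_\ell v$ separately by functions in $C^\infty(\overline{\Omega}) \cap W_0$ and to sum. The reason Lemma~\ref{lem:Ap01} is invoked in this precise form is that its property $\nabla\tilde\eta_\ell\cdot n_\Gamma = 0$ transfers the boundary behaviour of $v$ to each factor: on $\Gamma$ one has $\nabla(\tilde\eta_\ell v)\cdot n_\Gamma = (\nabla\tilde\eta_\ell\cdot n_\Gamma)v + \tilde\eta_\ell(\nabla v\cdot n_\Gamma) = \tilde\eta_\ell(\nabla v\cdot n_\Gamma)$, which vanishes whenever $v\in W_0$.

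For the interior index $\ell = 0$, the function $\tilde\eta_0 v$ is compactly supported in $U_0$ with $\overline{U_0}\subset\Omega$, so a standard Friedrichs mollification produces $\varphi_\delta^0 \in C_c^\infty(\Omega)$ with $\varphi_\delta^0 \to \tilde\eta_0 v$ in $V$ (and in $H^2(\Omega)$ when $v\in W_0$); these trivially lie in $W_0$ since they vanish near $\Gamma$. For each boundary index $\ell\in\{1,\dots,M\}$, I would use the $C^\infty$-diffeomorphism $\Theta_\ell\circ\Xi_\ell$ of Remark~\ref{rem:NS} to transfer $\tilde\eta_\ell v$ to the function $g_\ell := (\tilde\eta_\ell v)\circ\Theta_\ell\circ\Xi_\ell$ on the upper half-cylinder $W_\ell^+$, compactly supported inside $W_\ell$. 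Let $E g_\ell$ denote its even reflection in the variable $z_N$ across $\{z_N = 0\}$, and mollify it with a standard mollifier $\rho_\delta$ that is even in $z_N$; then $E g_\ell * \rho_\delta$ is again even in $z_N$, so $\partial_{z_N}(E g_\ell * \rho_\delta)(z',0) = 0$ identically. Pulling back through $(\Theta_\ell\circ\Xi_\ell)^{-1}$ and invoking the boundary-derivative formula \eqref{bc0}, the resulting $\varphi_\delta^\ell\in C^\infty(\overline{\Omega})$ satisfies $\nabla\varphi_\delta^\ell\cdot n_\Gamma = 0$ on $\Gamma\cap U_\ell$, hence (extending by zero) $\varphi_\delta^\ell\in C^\infty(\overline{\Omega})\cap W_0$. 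Setting $\varphi_\delta := \sum_{\ell=0}^M\varphi_\delta^\ell$ produces the desired approximant, the Neumann condition being preserved by linearity.

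It then remains to establish the two convergences. Since $\Theta_\ell\circ\Xi_\ell$ is a $C^\infty$-diffeomorphism, it induces bounded isomorphisms on $H^1$ and on $H^2$, so it is enough to verify the convergences in the flat coordinates. The even-reflection operator $E$ is bounded from $H^1(W_\ell^+)$ into $H^1(W_\ell)$ without any compatibility condition, and mollification converges in $H^1$; this yields \eqref{eq:approx-C^infty} for general $v\in V$. When $v\in W_0$, the transferred datum $g_\ell$ satisfies $\partial_{z_N}g_\ell(z',0) = 0$ by the discussion above together with \eqref{bc0}, and this is exactly the compatibility condition guaranteeing that $E$ maps $H^2(W_\ell^+)$ boundedly into $H^2(W_\ell)$ (the normal derivative reflects oddly, and its vanishing trace keeps it in $H^1$). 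Mollification then converges in $H^2$, which gives \eqref{eq:approx-C^infty2}.

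The hard part is precisely this last point: the even extension preserves $H^2$-regularity only under the vanishing normal-derivative condition, so the whole argument hinges on two structural facts working in tandem, namely that the partition of unity of Lemma~\ref{lem:Ap01} was constructed so that each $\tilde\eta_\ell v$ retains the membership $v\in W_0$, and that the flattening $\Xi_\ell$ converts the geometric condition $\nabla v\cdot n_\Gamma = 0$ into the flat condition $\partial_{z_N}(\cdot)|_{z_N = 0} = 0$ via \eqref{bc0}. A secondary technical point is to keep all the transferred supports compactly inside $W_\ell$, so that for $\delta$ small the mollification does not leak outside the coordinate chart and the pullbacks remain supported in $U_\ell$.
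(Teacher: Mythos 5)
Your proposal is correct and follows essentially the same route as the paper's proof: localization via the partition of unity of Lemma~\ref{lem:Ap01}, interior mollification for $\ell=0$, and for the boundary charts an even reflection of the flattened localized function followed by mollification, with the $H^2$-compatibility condition $\partial_{z_N}(\cdot)|_{z_N=0}=0$ secured exactly by \eqref{bc0} together with $\nabla\tilde\eta_\ell\cdot n_\Gamma=0$. The only cosmetic difference is that you deduce the preserved Neumann condition from the evenness in $z_N$ of the mollified extension, whereas the paper verifies the same identity by an explicit splitting of the convolution integral (implicitly using a mollifier even in $z_N$, which you state explicitly).
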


\begin{proof}
  Let us fix any $v \in V$. Let $\{ \tilde \eta_\ell \}_{\ell = 0}^\infty \subset C_c^\infty(\R^N)$ be the partition of unity as in Lemma \ref{lem:Ap01}. Then, to prove this lemma, it is sufficient to show the following item ($*$):
  \begin{itemize}
    \item[($*$)] for any $\ell \in \{ 0,1, \dots,M \}$, there exists a sequence of functions $\{ \varphi_\delta^{(\ell)} \}_{\delta > 0} \subset C^\infty(\overline{\Omega}) \cap W_0$, such that $\varphi_\delta^{(\ell)} \to \tilde \eta_\ell v$ in $V$ as $\delta \downarrow 0$, and in particular, when $v \in W_0$, this convergence is realized in the (strong) topology of $H^2(\Omega)$.
  \end{itemize}

  In the case when $ \ell = 0 $, the targeted function $\tilde \eta_0 v$ has a compact support in $\Omega$. So, by using the standard mollifier $\{ \rho_\delta \}_{\delta > 0}$, we will obtain the sequence $\{ \varphi_\delta^{(0)} \}_{\delta > 0}$ required in ($*$), as follows:
  \begin{equation}
    \varphi_\delta^{(0)} := \rho_\delta * (\tilde \eta_0 v) \mbox{ on } \overline{\Omega}, \mbox{ for all } \delta > 0.
  \end{equation}

    Next, we consider the case when $\ell > 0$. Let us fix  $\ell \in \{ 1,\dots, M \}$, and  set:
  \begin{equation}
    w_\ell := (\tilde \eta_\ell v) \circ \Xi_\ell^{-1} \circ \Theta_\ell^{-1} \mbox{ in } H^1(W_\ell^+).
      \label{ken.ext00}
  \end{equation}
    On this basis, we define an extension $\tilde w_\ell \in H^1(\R^N)$ of $ w_\ell $, by letting:
  \begin{gather}
    \tilde w_\ell (y', y_N) := \begin{dcases}
      w_\ell (y',y_N), & \mbox{if } y = [y',y_N] \in W_\ell^+,
      \\
      w_\ell (y',-y_N), & \mbox{if } y = [y', y_N] \in W_\ell^-,
      \\
        0, & \mbox{otherwise,}
    \end{dcases} \label{ken.ext01}
    \\
      \mbox{for a.e. } y = [y', y_N] \in \R^N, \mbox{ with } y' \in \R^{N-1} \mbox{ and } y_N \in \R.
  \end{gather}
  Since $ v \in V $ implies $\tilde \eta_\ell v \in V $, we immediately see from \eqref{ken.ext00} and \eqref{ken.ext01} that $\tilde w_\ell \in H^1(W_\ell)$. Additionally, we can say that:
    \begin{itemize}
        \item[$(**)$]if $v \in W_0$, then $\tilde w_\ell \in H^2(\R^N)$, and $ \partial_N \tilde{w}_\ell(z', 0) = 0 $ for a.e. $ z' \in \R^{N -1} $.
\end{itemize}
    In fact, by virtue of \eqref{bc0}, \eqref{eq:partition.1}, \eqref{ken.ext00}, \eqref{ken.ext01}, and $v \in W_0$, it is observed that:
  \begin{equation}
    \tilde w_\ell \in H^2(W_\ell^+ \cup W_\ell^-), \mbox{ with } \supp \tilde w_\ell \subset W_\ell, \label{ken.ext02}
  \end{equation}
    and 
    \begin{align}
        \partial_N w_\ell(z', 0) ~&  = -\bigl[ \nabla (\tilde{\eta}_\ell v) \cdot n_\Gamma \bigr](x)
        = -\bigl[ \tilde{\eta}_\ell \nabla (v \cdot n_\Gamma ) \bigr](x) -\bigl[ v \nabla (\tilde{\eta}_\ell \cdot n_\Gamma ) \bigr](x) 
        \\
        & = 0, \mbox{ for a.e. $ z' \in r_\ell \B^{N -1} $ and $ x = \Theta_\ell \Xi_\ell [z', 0] \in \Gamma $.}
        \label{ken.ext02-01}
    \end{align}
    Furthermore, for any $\varphi \in C_c^\infty(W_\ell)$, the properties \eqref{ken.ext02} and \eqref{ken.ext02-01} enable us to compute as follows:
  \begin{align}
    &\langle \partial_N^2 \tilde w_\ell, \varphi \rangle = -\int_{W_\ell} \partial_N \tilde w_\ell \partial_N \varphi dy 
    \\
    &\qquad = -\int_{r_\ell \B^{N-1}} \int_0^{h_\ell} [\partial_N w_\ell](y', y_N) \partial_N \varphi(y',y_N) \,dy_N dy'
    \\
    &\qquad\qquad + \int_{r_\ell \B^{N-1}} \int_{-{h_\ell}}^0 [\partial_N w_\ell](y', -y_N) \partial_N \varphi(y',y_N) \,dy_N dy'
    \\
    &\qquad = \int_{r_\ell \B^{N-1}} \int_0^{h_\ell} \partial_N [\partial_N w_\ell](y', y_N) \varphi(y',y_N) \, dy_N dy'
    \\
    &\qquad\qquad - \int_{r_\ell \B^{N-1}} \int_{-{h_\ell}}^0 \partial_N[\partial_N w_\ell](y', -y_N) \varphi(y',y_N) \,dy_N dy'
    \\
    &\qquad\qquad -\int_{r_\ell \B^{N-1}} \Bigl[ [\partial_N w_\ell](y', y_N) \varphi(y',y_N)\Bigr]_0^{h_\ell}\,dy' 
    \\
    &\qquad\qquad+ \int_{r_\ell \B^{N-1}} \Bigl[[\partial_N w_\ell](y', -y_N) \varphi(y',y_N)\Bigr]_{-h_\ell}^0 \,dy' 
    \\
    &\qquad = \int_{W_\ell} \biggl( [\partial_N^2 w_\ell](y',y_N) \chi_{W_\ell^+}(y',y_N) + [\partial_N^2 w_\ell](y',-y_N)\chi_{W_\ell^-}(y',y_N) \biggr)\varphi(y',y_N) \,dy.
  \end{align}
  This variational identity implies that:
  \begin{gather}
      \partial_N^2 \tilde w_\ell = \partial_N^2 w_\ell \circ \Pi_0 \in H, \mbox{ in } \D'(W_\ell), \label{ken.ext03}
    \\
    \mbox{with a Lipschitz transform } \Pi_0 :\,y = [y',y_N] \in \R^N \mapsto \Pi_0 y := [y',|y_N|].
  \end{gather}
    The item $(**)$ will be ensured as a consequence of \eqref{ken.ext02} and \eqref{ken.ext02-01}, and \eqref{ken.ext03}.

  Now, let us define a sequence $\{ \varphi_\delta^{(\ell)} \}_{\delta > 0} \in C_c^\infty(\R^N)$, as follows:
  \begin{equation}
    \varphi_\delta^{(\ell)} := (\rho_\delta * \tilde w_\ell) \circ \Xi_\ell^{-1} \circ \Theta_\ell^{-1} \mbox{ on } \overline{\Omega}, \mbox{ for all } \delta > 0. 
  \end{equation}
  Then, in view of the arguments as in \eqref{ken.ext00}--\eqref{ken.ext03}, we can say that the item ($*$) has been verified, except for the boundary condition:
  \begin{equation}
    \nabla \varphi_\delta^{(\ell)} \cdot n_\Gamma = 0 \mbox{ on } \Gamma, \mbox{ for all } \delta > 0 \mbox{ and } \ell = 1, \dots, M. \label{ken.ext04}
  \end{equation}
  included in $\{ \varphi_\delta^{(\ell)} \}_{\delta > 0} \subset C^\infty(\overline{\Omega}) \cap W_0$. However, with Remark \ref{rem:NS} in mind, this boundary condition will be verified as a consequence of the following computation:
  \begin{align}
    &\partial_N (\rho_\delta * \tilde w_\ell)(y',0) = \int_{\R^N} \partial_N \tilde w_\ell (\xi',\xi_N) \rho_\delta(y'- \xi', -\xi_N) \,d\xi
    \\
    &\qquad = \int_{r_\ell \B^{N-1}} \int_0^{h_\ell} [\partial_N w_\ell](\xi', \xi_N) \rho_\delta(y' - \xi', -\xi_N) \,d\xi_N d\xi'
    \\
    &\qquad\qquad - \int_{r_\ell \B^{N-1}} \int_{-h_\ell}^0 [\partial_N w_\ell](\xi', -\xi_N) \rho_\delta(y' - \xi', -\xi_N) \,d\xi
    \\
    &\qquad = 0, \quad \mbox{ for a.e. } y' \in r_\ell \B^{N-1}.
  \end{align}

  Thus, we complete the proof of this lemma.
\end{proof}

\begin{lem}\label{Ehrling}
  For any $r > 0$, there exists a constant $C_r \geq 0$, depending on $r$, such that:
  \begin{equation}
    | v |_{L^2(\Gamma)}^2 \leq r| \nabla v |_{[H]^N}^2 + C_r |v |^2_H, \mbox{ for all } v \in V.
  \end{equation}
\end{lem}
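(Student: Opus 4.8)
The inequality is a trace interpolation estimate of Ehrling type, and the plan is to prove it by the usual compactness-and-contradiction argument. The single structural input I would isolate first is the \emph{compactness} of the trace operator $\gamma_0 : V = H^1(\Omega) \to L^2(\Gamma)$. Under (A1) this factors as the bounded trace $H^1(\Omega) \to H^{1/2}(\Gamma)$ followed by the compact embedding $H^{1/2}(\Gamma) \hookrightarrow\hookrightarrow L^2(\Gamma)$, the latter being Rellich's theorem on the smooth compact manifold $\Gamma$ (and when $N = 1$ the space $L^2(\Gamma)$ is two-dimensional, so compactness of the bounded map $\gamma_0$ is automatic).

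Granting this, I would argue by contradiction. Suppose the claim fails for some fixed $r = r_0 > 0$; then for each $n \in \N$ there is $v_n \in V$ with
\[
|v_n|_{L^2(\Gamma)}^2 > r_0 |\nabla v_n|_{[H]^N}^2 + n\,|v_n|_H^2 .
\]
After normalizing so that $|v_n|_{L^2(\Gamma)} = 1$, the inequality forces $|\nabla v_n|_{[H]^N}^2 < 1/r_0$ and $|v_n|_H^2 < 1/n$. Hence $\{ v_n \}$ is bounded in $V$ and $v_n \to 0$ in $H$. Passing to a subsequence, $v_n \rightharpoonup v$ weakly in $V$; since $v_n \to 0$ strongly in $H$, the weak limit must be $v = 0$. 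By the compactness of $\gamma_0$, the traces converge strongly, $\trace{v_n} \to 0$ in $L^2(\Gamma)$, which contradicts $|v_n|_{L^2(\Gamma)} = 1$. This contradiction yields the desired $C_r$ for every $r > 0$.

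As an alternative that also produces an explicit constant, I could avoid compactness entirely: choosing a cutoff $\eta \in C_c^\infty(\R^N)$ equal to $1$ near $\Gamma$ and supported in the region where $d_\Gamma$ is smooth (Remark~\ref{rem:NS}), the field $\Phi := \eta\,\nabla d_\Gamma \in C^\infty(\overline{\Omega};\R^N)$ satisfies $\Phi \cdot n_\Gamma = 1$ on $\Gamma$, and the divergence theorem applied to $v^2 \Phi$ (valid since $v^2 \in W^{1,1}(\Omega)$ for $v \in V$) gives $\int_\Gamma v^2\,d\Gamma = \int_\Omega (2 v\,\nabla v \cdot \Phi + v^2 \diver \Phi)\,dx$; a single application of Young's inequality to the first term then delivers the bound with $C_r$ depending on $r$, $|\Phi|_{L^\infty(\Omega)}$, and $|\diver \Phi|_{L^\infty(\Omega)}$. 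The proof carries essentially no analytic obstacle; the only point requiring care is the justification of the trace compactness (or, in the alternative route, the global smoothness of $\Phi$ and the applicability of the divergence theorem at $H^1$-regularity), both of which are guaranteed by the smoothness of $\Gamma$ assumed in (A1).
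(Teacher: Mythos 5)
Your primary argument is exactly the paper's proof: the same contradiction setup, the same normalization by $|v_n|_{L^2(\Gamma)}$, and the same appeal to compactness of the trace $V \to L^2(\Gamma)$ to contradict the unit trace norm (your version even spells out the identification of the weak limit slightly more carefully than the paper does). The alternative divergence-theorem route you sketch is also sound and would give an explicit constant, but it is not needed; the compactness argument you lead with is the one the paper uses.
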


\begin{proof}
  We prove this lemma by a contradiction. If Lemma \ref{Ehrling} does not hold, then there exists $r_0 > 0$ such that for any $n \in \N$, there exists $v_n \in V$ satisfying:
  \begin{equation}
    |v_n|_{L^2(\Gamma)}^2 > r_0 | \nabla v_n|_{[H]^N}^2 + n |v_n |_H^2. \label{eq:Ehrling1}
  \end{equation}
  Here, let us set:
  \begin{equation}
    \hat v_n := \frac{v_n}{|v_n|_{L^2(\Gamma)}}, \quad{\rm for~any}~n \in \N.
  \end{equation}
  Then, from \eqref{eq:Ehrling1}, we can see that $\{ \hat v_n \}_{n \in \N}$ is bounded in $V$. On account of the compact embedding $V \subset L^2(\Gamma)$, there exists a subsequence $\{ n_j \}_{j \in \N} \subset \{ n \}$ and a function $\hat v \in V$ such that:
  \begin{equation}
    \hat v_{n_j} \to \hat v ~{\rm in~} L^2(\Gamma) \mbox{ as }  j \to \infty, \label{ken00}
  \end{equation}
  and hence
  \begin{equation}
    |\hat v|_{L^2(\Gamma)} = \lim_{j \to \infty} |\hat v_{n_j}|_{L^2(\Gamma)} = 1. \label{ken01}
  \end{equation}
  Meanwhile, \eqref{eq:Ehrling1} and \eqref{ken01} will lead to $\hat v = 0$ in $H$, i.e. $\hat v = 0$ in $L^2(\Gamma)$. This contradicts \eqref{ken01}. 
  
  Thus, we finish the proof of this lemma.
\end{proof}

\providecommand{\href}[2]{#2}
\providecommand{\arxiv}[1]{\href{http://arxiv.org/abs/#1}{arXiv:#1}}
\providecommand{\url}[1]{\texttt{#1}}
\providecommand{\urlprefix}{URL }


\begin{thebibliography}{10}

\bibitem{MR0773850}
\newblock H.~Attouch,
\newblock \emph{Variational Convergence for Functions and Operators},
\newblock Applicable Mathematics Series, Pitman (Advanced Publishing Program),
  Boston, MA, 1984.

\bibitem{MR2582280}
\newblock V.~Barbu,
\newblock \emph{Nonlinear Differential Equations of Monotone Types in {B}anach
  Spaces},
\newblock Springer Monographs in Mathematics, Springer, New York, 2010,
\newblock \urlprefix\url{http://dx.doi.org/10.1007/978-1-4419-5542-5}.

\bibitem{MR0348562}
\newblock H.~Br\'ezis,
\newblock \emph{Op\'erateurs Maximaux Monotones et Semi-groupes de Contractions
  dans les Espaces de {H}ilbert},
\newblock North-Holland Publishing Co., Amsterdam-London; American Elsevier
  Publishing Co., Inc., New York, 1973,
\newblock North-Holland Mathematics Studies, No. 5. Notas de Matem\'atica (50).

\bibitem{MR0466818}
\newblock H.~Brill,
\newblock A semilinear {S}obolev evolution equation in a {B}anach space,
\newblock \emph{J. Differential Equations}, \textbf{24} (1977), 412--425,
\newblock \urlprefix\url{https://doi.org/10.1016/0022-0396(77)90009-2}.

\bibitem{MR3661429}
\newblock P.~Colli, G.~Gilardi, R.~Nakayashiki and K.~Shirakawa,
\newblock A class of quasi-linear {A}llen--{C}ahn type equations with dynamic
  boundary conditions,
\newblock \emph{Nonlinear Anal.}, \textbf{158} (2017), 32--59,
\newblock \urlprefix\url{http://dx.doi.org/10.1016/j.na.2017.03.020}.

\bibitem{emmrich1999discrete}
\newblock E.~Emmrich,
\newblock \emph{Discrete versions of Gronwall's lemma and their application to
  the numerical analysis of parabolic problems},
\newblock Technical Report 637, Institute of Mathematics, Technische
  Universit\"{a}t Berlin,
  ``\,\href{http://www3.math.tu-berlin.de/preprints/files/Preprint-637-1999.pdf}{http://www3.math.tu-berlin.de/preprints/files/Preprint-637-1999.pdf}\,'',
  1999.

\bibitem{MR2746654}
\newblock M.-H. Giga and Y.~Giga,
\newblock Very singular diffusion equations: second and fourth order problems,
\newblock \emph{Jpn. J. Ind. Appl. Math.}, \textbf{27} (2010), 323--345,
\newblock \urlprefix\url{http://dx.doi.org/10.1007/s13160-010-0020-y}.

\bibitem{MR1865089}
\newblock M.-H. Giga, Y.~Giga and R.~Kobayashi,
\newblock Very singular diffusion equations,
\newblock in \emph{Taniguchi {C}onference on {M}athematics {N}ara '98}, vol.~31
  of Adv. Stud. Pure Math.,
\newblock Math. Soc. Japan, Tokyo, 2001,
\newblock 93--125.

\bibitem{MR2096945}
\newblock Y.~Giga, Y.~Kashima and N.~Yamazaki,
\newblock Local solvability of a constrained gradient system of total
  variation,
\newblock \emph{Abstr. Appl. Anal.}, 651--682,
\newblock \urlprefix\url{https://doi.org/10.1155/S1085337504311048}.

\bibitem{giga2023fractional}
\newblock Y.~Giga, A.~Kubo, H.~Kuroda, J.~Okamoto, K.~Sakakibara and M.~Uesaka,
\newblock Fractional time differential equations as a singular limit of the
  kobayashi-warren-carter system,
\newblock \emph{arXiv preprint arXiv:2306.15235}.

\bibitem{MR0330774}
\newblock V.~R. Gopala~Rao and T.~W. Ting,
\newblock Solutions of pseudo-heat equations in the whole space,
\newblock \emph{Arch. Rational Mech. Anal.}, \textbf{49} (1972/73), 57--78,
\newblock \urlprefix\url{https://doi.org/10.1007/BF00281474}.

\bibitem{Kenmochi81}
\newblock N.~Kenmochi,
\newblock Solvability of nonlinear evolution equations with time-dependent
  constraints and applications,
\newblock \emph{Bull. Fac. Education, Chiba Univ.
  (\url{http://ci.nii.ac.jp/naid/110004715232})}, \textbf{30} (1981), 1--87.

\bibitem{MR1847840}
\newblock N.~Kenmochi and K.~Shirakawa,
\newblock Stability for a parabolic variational inequality associated with
  total variation functional,
\newblock \emph{Funkcial. Ekvac.}, \textbf{44} (2001), 119--137.

\bibitem{MR1851862}
\newblock N.~Kenmochi and K.~Shirakawa,
\newblock A variational inequality for total variation functional with
  constraint,
\newblock \emph{Nonlinear Anal.}, \textbf{46} (2001), 435--455.

\bibitem{MR1712447}
\newblock R.~Kobayashi and Y.~Giga,
\newblock Equations with singular diffusivity,
\newblock \emph{J. Statist. Phys.}, \textbf{95} (1999), 1187--1220,
\newblock \urlprefix\url{http://dx.doi.org/10.1023/A:1004570921372}.

\bibitem{MR3395125}
\newblock Y.~Lou, T.~Zeng, S.~Osher and J.~Xin,
\newblock A weighted difference of anisotropic and isotropic total variation
  model for image processing,
\newblock \emph{SIAM J. Imaging Sci.}, \textbf{8} (2015), 1798--1823,
\newblock \urlprefix\url{https://doi.org/10.1137/14098435X}.

\bibitem{MR0298508}
\newblock U.~Mosco,
\newblock Convergence of convex sets and of solutions of variational
  inequalities,
\newblock \emph{Advances in Math.}, \textbf{3} (1969), 510--585,
\newblock \urlprefix\url{http://dx.doi.org/10.1016/0001-8708(69)90009-7}.

\bibitem{MR2263926}
\newblock M.~Ptashnyk,
\newblock Nonlinear pseudoparabolic equations as singular limit of
  reaction-diffusion equations,
\newblock \emph{Appl. Anal.}, \textbf{85} (2006), 1285--1299,
\newblock \urlprefix\url{https://doi.org/10.1080/00036810600871909}.

\bibitem{MR0437936}
\newblock R.~E. Showalter and T.~W. Ting,
\newblock Pseudoparabolic partial differential equations,
\newblock \emph{SIAM J. Math. Anal.}, \textbf{1} (1970), 1--26,
\newblock \urlprefix\url{https://doi.org/10.1137/0501001}.

\bibitem{MR0916688}
\newblock J.~Simon,
\newblock Compact sets in the space {$L^p(0,T;B)$},
\newblock \emph{Ann. Mat. Pura Appl. (4)}, \textbf{146} (1987), 65--96,
\newblock \urlprefix\url{http://dx.doi.org/10.1007/BF01762360}.

\bibitem{MR2170510}
\newblock Y.-H.~R. Tsai and S.~Osher,
\newblock Total variation and level set methods in image science,
\newblock \emph{Acta Numer.}, \textbf{14} (2005), 509--573,
\newblock \urlprefix\url{https://doi.org/10.1017/S0962492904000273}.

\bibitem{MR2028858}
\newblock L.~A. Vese and S.~J. Osher,
\newblock Modeling textures with total variation minimization and oscillating
  patterns in image processing,
\newblock \emph{J. Sci. Comput.}, \textbf{19} (2003), 553--572,
\newblock \urlprefix\url{https://doi.org/10.1023/A:1025384832106},
\newblock Special issue in honor of the sixtieth birthday of Stanley Osher.

\end{thebibliography}

\end{document}